    \newtheorem*{maintheorem*}{Main Result}
    \newcommand{\ep}{\epsilon}
    \newcommand{\m}{\,\mbox{d}}
    \newcommand{\f}{\boldsymbol}
    \newcommand{\domain}{D}
    \newcommand{\potential}{\mathcal{V}}
    \newcommand{\mass}{{m_{\rho}}}
    \numberwithin{equation}{section}
    \crefname{figure}{Figure}{Figures}%
    \title[The RIDK equation: DG approximation ... modelling for low-density regime]{The Regularised Inertial Dean--Kawasaki equation: Discontinuous Galerkin approximation and modelling for low-density regime}
    \author{Federico Cornalba}
    \address{Institute of Science and Technology Austria (ISTA), Am~Campus~1, 
    3400 Klosterneuburg, Austria}
    \email{federico.cornalba@ist.ac.at}
    \author{Tony Shardlow}
    \address{University of Bath, BA2 7AY, United Kingdom}
    \email{t.shardlow@bath.ac.uk}
    \date{\today}
\begin{document}
    \maketitle
    \begin{abstract}

The Regularised Inertial Dean--Kawasaki model (RIDK) -- introduced by the authors and J. Zimmer in earlier works -- is a nonlinear stochastic PDE capturing fluctuations around the mean-field limit for large-scale particle systems in both particle density and momentum density. 

We focus on the following two aspects. Firstly, we set up a Discontinuous Galerkin (DG) discretisation scheme for the RIDK model: we provide suitable definitions of numerical fluxes at the interface of the mesh elements which are consistent with the wave-type nature of the RIDK model and grant stability of the simulations, and we quantify the rate of convergence in mean square to the continuous RIDK model. 
Secondly,  we introduce modifications of the RIDK model in order to preserve positivity of the density (such a feature only holds in a ``high-probability sense'' for the original RIDK model). By means of numerical simulations, we show that the modifications lead to physically realistic and positive density profiles. In one case, subject to additional regularity constraints, we also prove positivity.  Finally, we present an application of our methodology to a system of diffusing and reacting particles.

Our Python code is available in open-source format.
\end{abstract}
\section{Introduction}
The Regularised Inertial Dean--Kawasaki model (RIDK; see \cite{cornalba2021well}) is a stochastic PDE describing fluctuations of large-scale particle systems, which, crucially, are of inertial type. 
Specifically, RIDK not only keeps track of the particle density, but also the particle momentum density. It was originally derived from inertial Langevin dynamics, which is an established and accurate microscopic representation for a wide range of phenomena such as, but non limited to, active matter \cite{cates2015motility}, nucleation for colloids/thermal advection \cite{lutsko2012dynamical}, thin-liquid films rupture \cite{duran2019instability}, density/agent-based models \cite{helfmann2021interacting,djurdjevac2022feedback}, bacterial dynamics \cite{thompson2011lattice}.

In order to give minimal context to RIDK, let a density $\rho_\epsilon$ and momentum density $\vec{j}_\epsilon$ be defined by 
\begin{align}\label{ParticleDensities}
  \rho_\ep(\f{x},t) \coloneq \frac 1N 
  \sum_{i=1}^{N}{w_\ep(\f{x}-\f{q}_i(t))}, \quad \f{j}_\ep(\f{x},t) \coloneq \frac 1N\sum_{i=1}^{N}{\f{p}_i(t)w_\ep(\f{x}-\f{q}_i(t))},
\end{align}
for a smoothed delta function $w_\epsilon$ and regularisation parameter $\epsilon > 0$,
associated with an $N$-particle system, with positions and velocities $\{\f{q}_i,\f{p}_i\}_{i=1}^{N}$ undergoing inertial Langevin dynamics for potential energy $\potential$, dissipation parameter $\gamma > 0$ and noise intensity $\sigma\in\mathbb{R}$. 
On the periodic domain $D=\mathbb{T}^d$, for $\mathbb{N} \ni d \geq 1$, RIDK is the system of stochastic PDEs for $(\rho,\vec j)\approx (\rho_\epsilon,\vec j_\epsilon)$ given by
\begin{subequations}
  \makeatletter
  \def\@currentlabel{RIDK}
  \makeatother
  \label{ridk}
  \renewcommand{\theequation}{RIDK.\arabic{equation}}
  \begin{align}
    \frac{\partial\rho}{\partial t}
    &=-\nabla \cdot \vec{j},\label{ridk-rho}\tag{RIDK-$\rho$}\\
    \frac{\partial \vec{j}}{\partial t}
    &= -\gamma\,\vec{j} - k_B T\,\nabla\rho -\nabla \potential \,\rho+ \sigma\,\frac{1}{\sqrt N}\,\sqrt{\rho}\,\vec\xi_\ep,\label{ridk-j}\tag{RIDK-$\f{j}$}
  \end{align}
\end{subequations}
where $k_{B}T = \sigma^2/2\gamma$ (the fluctuation-dissipation property) and the Gaussian noise $\vec \xi_\epsilon\in\reals^d $ has independent, mean zero, white-in-time and correlated-in-space components with spatial covariance kernel of \emph{von Mises}-type (see \eqref{Covariance}; this  kernel is the periodic analogous of a Gaussian kernel with variance $\ep^2$). Subject to technical constraints on the initial data, the system is well-posed (see \cite{cornalba2021well}).
Note that \eqref{ridk} may also include additional terms such as particles reacting or interacting weakly according to a pair potential. 

\subsection{Main results}\label{MainResultRepo} We further consolidate the analysis of \eqref{ridk} by addressing two important aspects, specifically:
\begin{itemize}
\item we provide a numerical approximation of \eqref{ridk} by means of the discontinuous Galerkin method in space (more on this in Subsection \ref{num_section}), and
\item we improve modelling aspects in the low $\rho$-density regime (more on this in Subsection \ref{modelling_section}). 
\end{itemize}

\subsubsection{A Discontinuous Galerkin (DG) framework}\label{num_section}
The density $\rho$ is governed by a conservation law, and we choose a DG approximation to ensure local conservation of $\rho$.  
We derive and prove convergence of a Raviart--Thomas mixed finite-element approximation in space. The Raviart--Thomas elements are an important class of discontinuous basis functions and are the minimal set  of elements that are mapped  by the divergence operator  onto the piecewise polynomials. Key to defining a DG method is the numerical flux, which defines the flow between individual elements. 
 The numerical flux (given in \eqref{flux} below) is found by solving a wave equation across element edges and  therefore depends on the wave-speed $\sqrt{k_BT}$ and jump quantities $\jump{\rho}$. We summarise the convergence result (for a full statement, see \cref{ErrorBoundStochastic}).
\begin{proposition}[Informal statement for Proposition \ref{ErrorBoundStochastic}]
  Let $u=(\rho,\f{j})$ (respectively, $u_h=(\rho_h,\f{j}_h)$) be the solution to \eqref{ridk} (respectively, its DG approximation on a mesh with mesh-width $h$, with $q$ being the order of the chosen finite element discretisation, see \eqref{dg_ridk}) up to some fixed time $T>0$. Set 
   \begin{align}\label{ConvergenceOrderSquared}
    \tilde{q} \coloneq
    \left\{
      \begin{array}{rl}
        1/2 & \mbox{if } q = 0, \\
        q & \mbox{if }q>0.
      \end{array}
      \right.
    \end{align}
  Assume the validity of the scaling
\begin{align}\label{ScalingNEpsilon}
N\ep^{\theta} = 1, \qquad \theta \geq 2\overline{s}+ d,\qquad\mbox{for some }\overline{s} > \max\{d/2+1; q+3\}.
\end{align}
Furthermore, assume that the noise in \eqref{dg_ridk} is obtained as a truncation of the full \eqref{ridk} noise on the first $J\propto \ep^{-1}\ln(h^{2\tilde{q}})$ Fourier modes, as detailed in \eqref{TruncationIndexSet}.
  Then we have 
  \begin{align*} 
    \sup_{0\le t\le T}
    & \mean{\norm{R_hu(t) - u_h(t) }^2_{L^2(\mathbb{T}^d)}}  \le C(\delta,T,d) \left\{1+ \mean{\|u_0\|^2_{H^{\overline{s}} \times [H^{\overline{s}}]^d}}\right\} e^{C_2\left(\mathcal{V},T\right)}\;h^{2\tilde{q}},
  \end{align*} 
  where $u_0$ is the initial datum, where $R_h$ is a suitable projection operator, and where $\delta$ is a suitable regularisation parameter (see Remark \ref{rem:1}).
   \end{proposition}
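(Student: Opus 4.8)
The plan is to run a mixed–finite–element energy argument adapted to the stochastic, wave-type structure of \eqref{ridk}, using It\^o's formula in place of the usual deterministic energy identity. First I would introduce the Raviart--Thomas projection $R_h=(R_h^\rho,R_h^{\f j})$ underlying the scheme and split the total error as $u-u_h=\eta+\xi$, with $\eta\coloneq u-R_hu$ (the projection error) and $\xi\coloneq R_hu-u_h$ (the quantity to be estimated). The role of $R_h$ is to make the commuting-diagram identity $\nabla\cdot R_h^{\f j}=R_h^\rho\,\nabla\cdot$ hold, so that the continuity equation \eqref{ridk-rho} is reproduced exactly on the discrete level and $\eta$ enters only through the lower-order terms and the numerical flux. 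Standard approximation theory then supplies $\mean{\norm{\eta(t)}^2}\lesssim h^{2\tilde q}\,\mean{\norm{u(t)}^2_{H^{\overline s}\times[H^{\overline s}]^d}}$, which is the source of the final rate; the regularity threshold $\overline s>\max\{d/2+1;q+3\}$ is exactly what is needed to bound these projection errors and, via Sobolev embedding, the noise coefficient below.

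Subtracting the scheme \eqref{dg_ridk} from the projected form of \eqref{ridk} yields a closed It\^o equation for $\xi$ in the (finite-dimensional) DG space, of the schematic form $\mathrm d\xi=[\mathcal A_h\xi+\mathcal C(\eta)]\,\mathrm dt+\mathcal S(\rho,\rho_h)\,\mathrm dW$, where $\mathcal A_h$ is the discrete damped-wave operator, $\mathcal C(\eta)$ collects the deterministic consistency (projection and flux) defects, and $\mathcal S$ is the stochastic defect. I would then apply It\^o's formula to the discrete energy $E(\xi)\coloneq \tfrac{k_BT}{2}\norm{\xi_\rho}^2+\tfrac12\norm{\xi_{\f j}}^2$, whose drift part mirrors the continuous energy of the wave operator. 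The deterministic contributions are handled as follows: the flux/transport terms cancel up to the interface penalties produced by solving the edge wave equation, and these penalties are sign-definite, giving discrete stability (and, for $q=0$, the extra half-order of numerical dissipation responsible for $\tilde q=1/2$); the damping term contributes $-\gamma\norm{\xi_{\f j}}^2\le0$; the potential term $-\nabla\potential\,\rho$ is absorbed by Young's inequality into $C(\potential)\,E(\xi)$ plus consistency, which is precisely what generates the factor $e^{C_2(\potential,T)}$ after Gr\"onwall; and $\mathcal C(\eta)$ is controlled by $C\,(E(\xi)+\norm{\eta}^2+\cdots)$ using the approximation estimates above.

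The delicate step is the stochastic term, and I expect it to be the main obstacle. Because \eqref{ridk} does not preserve positivity, the multiplicative coefficient must be taken as a regularised square root $g_\delta$ (see Remark \ref{rem:1}), globally Lipschitz with constant $L_\delta$ that deteriorates as $\delta\downarrow0$; this is the origin of the $\delta$-dependence in $C(\delta,T,d)$. The martingale part of $\mathrm dE(\xi)$ has zero mean, so only the It\^o correction survives, and it must be estimated by
\[
\tfrac{\sigma^2}{N}\,\operatorname{tr}(Q_J)\,\norm{g_\delta(\rho)-g_\delta(\rho_h)}^2
\le \tfrac{\sigma^2}{N}\,\operatorname{tr}(Q_J)\,L_\delta^2\big(\norm{\eta_\rho}^2+\norm{\xi_\rho}^2\big),
\]
where $Q_J$ is the $J$-mode-truncated von Mises covariance. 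Two things must balance here: the trace $\operatorname{tr}(Q_J)$ against the scaling $N\ep^{\theta}=1$ of \eqref{ScalingNEpsilon}, so that the $1/N$ prefactor tames the noise, and the spectral tail $\operatorname{tr}(Q-Q_J)$, whose contribution must be pushed below $h^{2\tilde q}$; the latter is exactly why the truncation index is chosen as $J\propto\ep^{-1}\ln(h^{2\tilde q})$, the von Mises Fourier coefficients decaying fast enough that this choice makes the truncation defect of the same order as the spatial error. Collecting all contributions gives $\tfrac{\mathrm d}{\mathrm dt}\mean{E(\xi)}\le C(\delta,\potential)\,\mean{E(\xi)}+C(\delta)\,h^{2\tilde q}\,(1+\mean{\norm{u}^2_{H^{\overline s}\times[H^{\overline s}]^d}})$, and Gr\"onwall's inequality, together with the equivalence of $E$ and the $L^2\times[L^2]^d$ norm, the vanishing of $\xi(0)$ up to projection error, and the a priori $H^{\overline s}$-regularity of the continuous solution established in \cite{cornalba2021well} (used to replace $\sup_{t}\mean{\norm{u(t)}^2_{H^{\overline s}}}$ by the initial-data norm), yields the stated bound.
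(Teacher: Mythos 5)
Your outline is essentially correct, but it follows a genuinely different route from the paper. You run a direct It\^o--energy argument: canonical Raviart--Thomas/$L^2$ projection with the commuting-diagram property, It\^o's formula applied to the discrete energy $E(\xi)$, dissipativity of the discrete form (the sign-definite facet penalty of \cref{lowerbound} plus the damping) for stability, and Gr\"onwall. The paper instead works with the \emph{mild} (Duhamel) form of the error: \cref{ErrorEquationStochastic} expresses $e_h=R_hu-u_h$ through the discrete semigroup $e^{A_h(t-s)}$ and splits it into five terms, each estimated via the It\^o isometry and stochastic-convolution bounds; crucially, its $R_h$ is not the canonical interpolant but a Ritz--Galerkin projection defined by $a(R_hz,v_h)=a(z,v_h)$, whose error bound (\cref{lemma:rge}) rests on the inf-sup condition (\cref{nearly_coercive}), continuity of $a,a_h$ in the $\dagger$/$\star$ norms (\cref{bdd_ah}), and the interpolation estimates of \cref{BHA}. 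Your approach is more elementary and makes the stabilising role of the Godunov flux explicit; the paper's approach localises all consistency defects inside the Ritz projection and the term $A_h(\tilde R_h-R_h)u$, at the price of the auxiliary inf-sup machinery. Both routes hinge on the same three quantitative inputs you identified: the moment bound \eqref{BoundWsRIDK} balanced against $N\ep^\theta=1$, the Lipschitz regularisation $B_{N,\delta}$ (source of the $\delta$-dependence), and the spectral tail bound of \cref{LemmaNoiseTailBound} justifying $J\propto\ep^{-1}|\ln(h^{2\tilde q})|$. Two caveats. First, writing a ``closed It\^o equation for $\xi$'' presupposes that $\m(R_hu)=R_h\,\m u$ with $\m u$ given in the path-wise (analytically strong) form \eqref{PathwiseSmoothedIntegrand}; this is exactly what forces the restrictive scaling \eqref{ScalingNEpsilon} and is handled in the paper by \cref{prop-pathwise} and \cref{ItoDifferential} --- you use it implicitly but should state it, since the mild solution alone does not suffice. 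Second, your attribution of $\tilde q=1/2$ for $q=0$ to ``extra numerical dissipation'' is backwards: the half order is a \emph{loss}, coming from the facet-jump contributions to the projection error (the trace inequality of \cref{breg} costs $h^{1/2}$ in \cref{BHA}); the sign-definite penalty is what allows those jump terms to be absorbed, not what improves the rate.
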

  The justification for \eqref{ScalingNEpsilon} is technical, and is deferred to Remark \eqref{JustifyScaling}.
  
  Our implementation \cite{gitrepo} relies on a semi-implicit Euler--Maruyama time-stepping method, and makes use of the \texttt{Python} package \texttt{Firedrake}~\cite{firedrake}. We use this implementation to illustrate the behaviour of RIDK; for a one-dimensional example, see \cref{fig:intro}, which shows four snapshots across the time interval $[0,10$] of the $x$-profile of $\rho$ with initial data 
  \begin{equation}
    j_0(x)=0,\qquad \rho_0(x)=[2\pi(1+\pi)]^{-1}(1+x),\quad x\in(0,2\pi],\label{init}
  \end{equation} 
  (note that $\rho_0$ has unitary mass) and parameters 
  \begin{equation}
    \gamma=0.25,\quad\sigma=0.25,\quad\epsilon=0.05,\quad N=10^3,\quad \potential(x)=0.5 \cos(x)^2.
  \label{param}\end{equation}
  \begin{figure}[h] 
    \centering
    \includegraphics{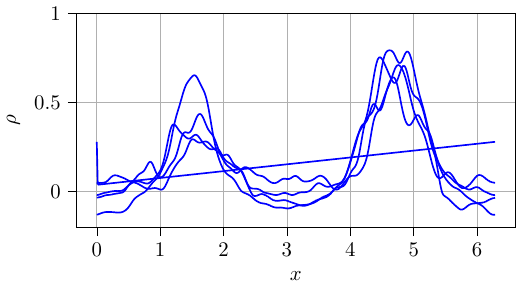}
    \caption{Five snapshots of the particle density $\rho(\cdot,t)$ for \eqref{ridk} at $t=0,2.5,5,7.5,10$ with parameters defined by \cref{init,param}.}
    \label{fig:intro}
    \end{figure}
  
Conspicuously, the particle density $\rho$ in \cref{fig:intro} becomes negative and this is not physical.  The non-negativity is not caused by the numerical approximation, but rather is a fundamental limitation of the RIDK model; in fact, RIDK is akin to a damped wave equation and therefore has no maximum principle or guarantee of positivity for the density (more on this in subsection \ref{RelatedLiterature}).

\subsubsection{Modelling in low-density regime}\label{modelling_section}
  We propose modelling modifications to \eqref{ridk} that lead to positive density profiles. 
  The most promising one (see Subsection \ref{sec:RIDK-switch}) is concerned with separating the time scale of density and momentum density, and speeding up the dynamics of the latter in the low density regime. This results in excessive momentum (which may lead to negativity) to quickly dissipate. Explicitly, we consider the modified system in $(\rho,\vec J)$
\begin{subequations}
\makeatletter
        \def\@currentlabel{Mod}
        \makeatother
        \label{Mod}
        \renewcommand{\theequation}{Mod.\arabic{equation}}
\begin{align}
\frac{\partial\rho}{\partial t}
&=-\nabla \cdot \vec J,\label{Mod2rho}\tag{Mod-$\rho$}\\
\varphi_\tau(\rho)\frac{\partial \vec J}{\partial t}
&= -\gamma\,\vec J - k_BT\,\nabla\rho -\nabla \potential\,\rho^+ + \sigma\,\frac{1}{\sqrt N}
\,\sqrt{\rho\,\varphi_\tau(\rho)}\,\vec\xi_\ep,\label{Mod2j}\tag{Mod-$\f{J}$}
\end{align}  
\end{subequations}
where $\rho^+ \coloneq \max\{\rho,0\}$ and  $\varphi_\tau$ is a function smoothly transitioning from value zero (on the $(-\infty,\tau/2)$ interval) to value one (on the $(\tau, \infty)$ interval). 
We discuss the extent to which \eqref{Mod} preserves positivity of the density. In particular, we provide a maximum principle-based argument that guarantees non-negativity (nevertheless, this setting is so far limited by the lack of a well-posedness theory), discuss associated numerical approximations, and show relevant simulations.
Several related open questions are discussed.

In a second approach, instead of speeding up the momentum dynamics, we add extra diffusion to the density evolution. This approach appears to be less successful and numerical simulations suggest strong dependence of the results on the size of the added diffusion. A brief discussion in given in Subsection \ref{sec:ridk-mix-coeff}.

Finally, we present a two-dimensional example with two populations of reacting/diffusing agents; specifically, we set up the corresponding RIDK dynamics, and compare its behaviour with that of the agents' system: this example follows the setting of the over-damped counterpart treated in \cite{helfmann2021interacting}.

\subsection{Related Literature}\label{RelatedLiterature}

\subsubsection{Inertial models}

Interest in analysis and simulation of DK-type equations has grown substantially during the last decade, with applications ranging from  active matter \cite{cates2015motility}, nucleation for colloids/thermal advection \cite{lutsko2012dynamical}, thin-liquid films rupture \cite{duran2019instability}, density/agent-based models \cite{helfmann2021interacting,djurdjevac2022feedback}, bacterial dynamics \cite{thompson2011lattice}.
As far as inertial Dean--Kawasaki models are concerned, analytical well-posedness of \eqref{ridk} (in the form of existence of high-probability mild solutions) has been addressed in the case of independent particles in dimension $d=1$ \cite{Cornalba2019a}, weakly interacting particles in dimension $d=1$ \cite{Cornalba2021al}, and in any dimension $d\geq 1$ with optimal scaling \cite{cornalba2021well}.  
In terms of numerical works, we cite the finite-element discretisations of the inertial models \cite{helfmann2021interacting, kim2017stochastic}
 for the general fluctuating hydrodynamics approximation for reaction-diffusion and agent-based systems, as well as the more specific work \cite{djurdjevac2022feedback} for the description of co-evolving opinion and social dynamics within agent-based systems.

\subsubsection{Trade-off: RIDK versus original Dean--Kawasaki model}

The \eqref{ridk} model is the inertial counterpart to the classical (over-damped) Dean--Kawasaki model~\cite{Dean,Kawasaki} in particle density only (corresponding to formally taking the limit $\gamma\rightarrow \infty$ in \eqref{ridk}), which reads
\begin{align}\label{dk}\tag{DK}
\frac{\partial \rho}{\partial t} = \frac{N}{2}\Delta \rho + \nabla \cdot \left(\sqrt{\rho}\,\f{\xi}\right),
\end{align}
where $\vec \xi$ is a space-time white noise. While they both describe fluctuations in systems of finitely many particles, \eqref{ridk} and \eqref{dk} are substantially different mathematical objects, and have different strengths and weaknesses. On one hand, the \eqref{ridk} model is more advantageous than \eqref{dk}, as it: 
\begin{enumerate}[i)]
\item allows for a richer description of the particle system, as it also includes the momentum density; 
\item has a more interpretable, less mathematically challenging noise (it is not in divergence form);
\item allows for smoother solutions, and;
\item features densities with only position $\f{x}$ and time $t$ as independent variables, thus retaining the same interpretability of the \eqref{dk} model\footnote{Closed mesoscopic representations of under-damped particle systems can be derived by including the velocity as an independent variable in the mesoscopic densities: this is precisely what is done for Vlasov--Fokker--Planck systems (see, e.g., \cite{duong2013generic}).}. 
\end{enumerate}
On the other hand, \eqref{ridk} loses out to \eqref{dk} when it comes to the regularity of the deterministic component (heat vs. wave-type drift): This is one of the main reasons for which the \eqref{dk} has been, so far, more extensively studied. Furthermore, unlike \eqref{dk}, the derivation of \eqref{ridk} relies on a close-to-equilibrium assumption for the underlying particle system: namely, such an assumption allows to compare the gradient term $- k_B T\,\nabla\rho $ in \eqref{ridk} and the microscopic term 
\begin{align}\label{NotClosableJ2}
[\f{j}_{\ep,2}(\f{x},t)]_\ell \coloneq - N^{-1}\sum_{i=1}^{N}{\sum_{k=1}^{d}{p_{\ell,i}(t)p_{k,i}(t)\frac{\partial}{\partial x_\ell} w_\ep(\f{x}-\f{q}_i(t))}}, \quad \ell = 1,\dots,d,
\end{align}
which arises from a time differentiation of -- and is otherwise not closable in terms of -- the densities in \eqref{ParticleDensities}; essentially, the identification of $- k_B T\,\nabla\rho $ and \eqref{NotClosableJ2} is achieved by assuming that the velocities $\{\f{p}_i\}_{i=1}^{N}$ are close to the equilibrium with variance  $k_BT$, see \cite{cornalba2021well} for an expanded discussion. Unsurprisingly, this comparison deteriorates for regimes of low particle density, leading to negativity of the density, as already shown in \cref{fig:intro}.

As already mentioned, \eqref{ridk} and \eqref{dk} are mathematically quite different: More specifically, recovering \eqref{dk} from \eqref{ridk} can currently be done only to a very limited extent: Specifically, we are not aware of any rigorous result quantifying the -- formal -- over-damped limit $\gamma\rightarrow \infty$. On the contrary, the two models are better understood on their own in terms of the limit of $N\rightarrow \infty$ (with \eqref{dk} recovering the heat equation) and the simultaneous limit of $N\rightarrow \infty,\ep\rightarrow 0$ as prescribed by \eqref{ScalingNEpsilon} (with \eqref{ridk} recovering the wave equation). Finally, \eqref{dk} is also understood when it comes to large deviations principle when associated with removing regularisations for the noise square-root singularity at the origin \cite{fehrman2019well}

\subsubsection{Over-damped Dean--Kawasaki model}

The seminal results \cite{konarovskyi2019dean,konarovskyi2020dean} show that for \eqref{dk} -- and natural variations associated with weakly interacting particle systems -- the only admissible martingale solution is precisely the empirical distribution of the underlying particle system. This boils down to the rigid interplay of (highly singular) noise and drift in the setting of a stochastic Wasserstein gradient flow. These results were preceded by -- and are the natural outcome of -- a bulk of works indicating the need for various regularisations in the equation's drift in order to obtain non-trivial solutions \cite{andres2010particle,von2009entropic,konarovskyi2017reversible,konarovskyi2019modified}.
DK models have also been linked to large-deviation principles in more general settings (for instance, zero-range processes, see \cite{dirr2016entropic}).

A second group of works is related to analytical regularisations of \eqref{dk} equations (coming from using either coloured, truncated, or smoothed noise). We mention 
fluctuation weak error estimates for a regularised DK SPDE started from general initial particle profiles, along with non-negativity of the solution, comparison principle, entropy estimates \cite{djurdjevac2022weak},
existence of kinetic solutions for regularised versions of \eqref{dk} and generation of a random dynamical system \cite{fehrman2019well}, 
uniqueness of invariant measures and mixing for the corresponding Markov process \cite{fehrman2022ergodicity},
derivation of large-deviation principles for fluctuations of the symmetric simple-exclusion process \cite{dirr2020conservative}, 
well-posedness for versions of \eqref{dk} with correlated noise \cite{fehrman2021well}, and derivation of underlying particle dynamics corresponding to \eqref{dk} with correlated noise \cite{ding2022new}. 
Additionally, rates of CLT convergence of stochastic gradient descent dynamics in overparametrised shallow neural networks to conservative stochastic PDEs close to \eqref{dk} have been recently derived in \cite{gess2022conservative}.

As for numerical discretisations of \eqref{dk}, we mention structure-preserving finite-element and finite-difference schemes for approximating the fluctuation density of $N$ particles to arbitrary precision in $N^{-1}$ (\cite{cornalba2021dean} for independent Brownian motions, \cite{cornalba2023density} for weakly interacting particles), 
convergence analysis of a finite element approximation to a weak formulation for a regularised \eqref{dk} equation \cite{bavnas2020numerical},
full reconstruction of dissipative operators in gradient flow equations \cite{li2019harnessing}, 
finite-volume schemes for stochastic gradient flow equations with hybrid space discretisation of the deterministic and the stochastic DK dynamics taking advantage of both central and upwind schemes, and positivity-preserving schemes based on Brownian bridge techniques \cite{russo2021finite}, and finite-volume schemes for stochastic gradient flow equations with applications to Landau–Lifshitz Navier–Stokes equations \cite{donev2010accuracy}.
\begin{remark}
Recently, we have seen several authors turn to Discontinuous Galerkin methods as a way to numerically simulate stochastic PDE models, including conservation laws (see, e.g., the review paper \cite{li2020discontinuous}).
\end{remark}
 
\subsection{Summary of Contents} 
We recall useful properties of \eqref{ridk} in section \ref{ExistenceTheory}, and we set up the DG method in section \ref{GalerkinFramework} for approximating RIDK spatially. We study a noise-free, linear problem associated to \eqref{ridk} in section \ref{ErrorAnalysisLinearPDE}, and quantify the convergence of our DG approximation in space for the full system in section \ref{ErrorAnalysisRIDK}. In section \ref{Positivity}, we propose modifications to \eqref{ridk} in order to address the aforementioned out-of-equilibrium and density-positivity issues: these modifications are discussed both analytically and numerically. Finally, 
section \ref{extended} provides a comparison between a reacting/diffusing system of two populations of agents (in inertial form) and the associated RIDK model.

The appendix is devoted to technical lemmas for the \eqref{ridk} model (section \ref{AppRIDKSelectedFeatures}), 
and computing relevant It\^o differentials (section \ref{RelevantItoDifferentials}).

\section{well-posedness for ridk model}\label{ExistenceTheory}

We recall relevant notions from \cite{cornalba2021well}. 
We set $D\coloneq\mathbb{T}^d$. For any $s>d/2$, define the function spaces
\begin{align}
\mathcal{W}^s & \coloneq H^s(D) \times [H^s(D)]^d,\label{Ws}\\
\f{V}^{s+1} & \coloneq \{\f{j}\in [H^s(D)]^d\colon \nabla\cdot\f{j} \in H^s(D)\}\label{VsPlus1},
\end{align} 
where $H^s(D)$ is the usual Sobolev space of  functions with $s$ square-integrable weak derivatives. 
The model \eqref{ridk} is rewritten in the abstract stochastic PDE form
\begin{equation}
 \label{RidkAbstractRewriting}
   \begin{aligned}
    \!\!\!\!\! \m u(t)&=A\,u(t)\,\m t + F(u(t))\m t 
    +B_{N}(u(t))\,\m W_{\ep}(t), \\
    \!\!\!\!\!  u(0)&=u_0,
    \end{aligned}
\end{equation}
where $u=u_{\ep}=(\rho,\f{j})$, $W_{\ep}\coloneq(0,\f{\xi}_{\ep})$ is a $\mathcal{W}^s$-valued $Q$-Wiener noise (see Subsection \ref{AppNoiseExpansion} for full details), $A$ is the wave-type differential operator given by
\begin{align*}
  A\,u\colon \mathcal{D}(A)\coloneqq H^{s+1}(D)\times\f{V}^{s+1} \subset \mathcal{W}^s&\rightarrow \mathcal{W}^s, \\
  u=(\rho,\f{j}) &\mapsto \pp{-\nabla\cdot\f{j},
  \,-\gamma \,{\f{j}}
  -k_BT\,\nabla{\rho} }, 
\end{align*}
where $F(\rho,\vec j)=[0,-\nabla \potential\,\rho]$, and where the stochastic integrand $B_N$ is given by
\begin{align*}
  B_{N}({\rho},\f{j})(a,\f{b})
  \coloneqq \sigma\, N^{-1/2}\left(0,\, \sqrt{{\rho}}\,b_1,\dots,
  \sqrt{\rho}\,b_d\right)\in \mathbb{R}\times\mathbb{R}^d.
\end{align*}
As \eqref{ridk-rho} is a conservation law, we denote the total mass of the system as
\begin{align}\label{ConservedMass}
\mass \coloneq \int_{\domain}{\rho_0}(\vec x)\m \vec x.
\end{align}

We have the following well-posedness result for \eqref{RidkAbstractRewriting}.

 \begin{proposition}\label[proposition]{prop-pathwise}
Assume the scaling $N\ep^{\theta}=1$, for some $\theta>2d$. For initial data $(\rho_0,\f{j}_0) \in \mathcal{W}^s$ with $\rho_0$ positive and uniformly bounded away from zero, for every $\nu\in(0,1)$ and a suitable $T(\rho_0)>0$, there exists $N(\nu,T)\in\mathbb{N}$, a measurable set $F_{\nu}\subset \Omega$ with $\mathbb{P}(F_{\nu})\geq \nu$, and a unique non-negative process $u$ solving \eqref{RidkAbstractRewriting} on $F_{\nu}$ in a mild solution sense, meaning that 
\begin{align}\label{MildSolution}
  u(t) & = S(t)\,u_0 
  +\int_{0}^{t}{S(t-s)\,F(u(s))\,\emph{\m} s}
  +\int_{0}^{t}{S(t-s)\,
    B_{N}(u(s))\,\emph{\m} W_{\ep}}(s)
  \end{align}
on $F_{\nu}$ and $t\leq T$, for $N\geq N(\nu,T)$, and where $\{S(t)\}_{t\geq 0}$ is the $C_0$-semigroup associated with the operator $A$. Furthermore, \eqref{BoundWsRIDK} holds. 
 
If, in addition, the more restrictive scaling \eqref{ScalingNEpsilon} is satisfied, then $u$ is also a path-wise solution up to time $T=T(\rho_0)$, namely, we have 
\begin{align}\label{PathwiseSmoothedIntegrand}
u(t)=u_0 + \int_{0}^{t}{Au(s)\,\emph{\m} s}+
\int_{0}^{t}{F(u(s))\,\emph{\m} s}+ \int_{0}^{t}{B_{N}(u(s))\,\emph{\m} W_{\ep}(s)}
\end{align}
  on $F_{\nu}$ and $t\leq T$.
\end{proposition}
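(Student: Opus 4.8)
The plan is to treat \eqref{RidkAbstractRewriting} as a semilinear stochastic evolution equation and to handle the two claims by (i) a localised fixed-point argument for the mild solution and (ii) a regularity bootstrap for the path-wise solution. First I would record the properties of the three ingredients. The generator $A$ is of damped-wave type, and using the natural energy $\|\rho\|^2_{H^s}+(k_BT)^{-1}\|\f{j}\|^2_{H^s}$ together with the dissipative term $-\gamma\f{j}$, one checks that $A$ generates a $C_0$-semigroup $\{S(t)\}$ on $\mathcal{W}^s$ with at most exponential growth; the damping in fact makes $S(t)$ a contraction up to lower-order corrections. The drift $F(\rho,\f{j})=[0,-\nabla\potential\,\rho]$ is globally Lipschitz on $\mathcal{W}^s$ for smooth $\potential$. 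The genuine difficulty is $B_N$: the coefficient $\sqrt{\rho}$ is only locally Lipschitz on $(0,\infty)$ and degenerates at $\rho=0$, so the standard global fixed-point theory does not apply directly.

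To circumvent this I would introduce a smooth truncation, replacing $\sqrt{\rho}$ by a globally Lipschitz $g_R$ that agrees with $\sqrt{\rho}$ on $[1/R,\infty)$ and vanishes near the origin, yielding a truncated coefficient $B_N^R$. For fixed $R$ the truncated equation has Lipschitz coefficients, so Banach's fixed-point theorem applied to the Duhamel map associated with \eqref{MildSolution} on $C([0,T];L^2(\Omega;\mathcal{W}^s))$ delivers a unique global mild solution $u^R$. Here the stochastic convolution is controlled via the It\^o isometry, the Hilbert--Schmidt bound on $B_N^R$, and the spatial regularity of the von Mises noise; the scaling $N\ep^{\theta}=1$ from \eqref{ScalingNEpsilon} (in its weaker form $\theta>2d$) is what balances the noise amplitude $\sigma N^{-1/2}$ against the regularity budget. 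I would then show the truncation is inactive with high probability. Setting $\tau_R \coloneq \inf\{t:\rho(t)\le 1/R \text{ or } \|u^R(t)\|_{\mathcal{W}^s}\ge R\}$, the moment bounds leading to \eqref{BoundWsRIDK} provide a uniform control of $\sup_{t\le T}\|u^R(t)\|_{\mathcal{W}^s}$; since $\rho_0>0$ and the fluctuations are $O(N^{-1/2})$, for $N\ge N(\nu,T)$ the event $\{\tau_R>T\}$ has probability at least $\nu$. This event defines $F_\nu$, and on $F_\nu$ one has $u^R=u$, which therefore solves the untruncated mild equation.

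For the path-wise statement I would use the standard equivalence: a mild solution is a strong solution once $u(t)\in\mathcal{D}(A)=H^{s+1}(D)\times\f{V}^{s+1}$ for a.e.\ $t$ and each Duhamel term has the regularity needed to justify differentiating $S(t)$ under the integral. The restrictive scaling forces $\ov{s}>\max\{d/2+1;q+3\}$, so the datum $u_0$ lives in a much higher Sobolev space, and the mollified integrand $B_{N,\delta}$ appearing in \eqref{PathwiseSmoothedIntegrand} maps into $\mathcal{D}(A)$ by construction of the smoothing parameter $\delta$. Combining the extra regularity of $u_0$, the smoothing of $S(t)$, and the smoothness of $B_{N,\delta}$, I would verify that $S(t)u_0$ and both convolutions remain in $\mathcal{D}(A)$, allowing $A$ to be applied inside the integral via $\frac{\m}{\m t}S(t)=AS(t)$ on $\mathcal{D}(A)$; this turns \eqref{MildSolution} into \eqref{PathwiseSmoothedIntegrand}.

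The main obstacle is the interaction between the degenerate square-root coefficient and the positivity of $\rho$: the whole high-probability framework rests on quantifying how the $O(N^{-1/2})$ noise can be kept from driving the density below a fixed threshold on $[0,T]$. This is precisely where the scaling assumption is used decisively, and it is also the reason the solution is only \emph{high-probability} rather than global. The passage from mild to path-wise regularity is technically delicate but routine once the restrictive scaling and the mollification $B_{N,\delta}$ are in place.
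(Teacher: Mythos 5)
Your overall architecture is reasonable, and for the first claim your truncation-plus-stopping-time sketch is essentially a reconstruction of the argument that the paper does not reprove at all: the existence of the high-probability mild solution and the bound \eqref{BoundWsRIDK} are simply imported from \cite[Theorem 1.1]{cornalba2021well}, with the cut-off $h_\delta$ and the $\mathcal{W}^s$-sphere truncation described in Remark \ref{rem:1} playing the role of your $g_R$ and $\tau_R$. That part is fine.

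There is, however, a genuine gap in your mechanism for upgrading from mild to path-wise. You attribute the needed regularity $u(t)\in\mathcal{D}(A)$ to two sources, and both are wrong for this equation. First, ``the smoothing of $S(t)$'': $A$ is a damped wave-type operator, so $\{S(t)\}$ is a group-like hyperbolic semigroup that propagates but does not gain Sobolev regularity; there is no parabolic smoothing to exploit, and $S(t)u_0$ lies in $\mathcal{D}(A)$ only if $u_0$ already does. Second, ``$B_{N,\delta}$ maps into $\mathcal{D}(A)$ by construction of $\delta$'': the mollification $h_\delta$ regularises the square root as a function of the \emph{value} of $\rho$ (making the coefficient Lipschitz), and contributes nothing to \emph{spatial} regularity; the spatial regularity of the stochastic convolution comes from the smoothness of the von Mises noise $W_\ep$ and of $u$ itself. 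The actual source of regularity in the paper is different: one reruns the entire existence theory one Sobolev level higher (possible because \eqref{ScalingNEpsilon} forces $u_0\in\mathcal{W}^{\overline{s}}$ with $\overline{s}$ large and controls \eqref{BoundWsRIDK} at that level), so that $u$ takes values in $\mathcal{W}^{\overline{s}}\subset\mathcal{D}(A)$ when $A$ is viewed on $\mathcal{W}^{\overline{s}-1}$. Once that is in place, the paper does not differentiate the Duhamel formula directly (which is delicate for the stochastic convolution); it verifies the integrability hypotheses against the adjoint $A^\ast$ and invokes the abstract equivalences mild $\Rightarrow$ analytically weak $\Rightarrow$ analytically strong from \cite[Appendix F]{Prevot2007a}, together with \cite[Proposition 6.2]{Da-Prato2014a}. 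Your direct-differentiation route could be made to work via a stochastic Fubini argument, but as written it rests on a smoothing property the semigroup does not have.
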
 

We recall the proof of the proposition from  \cite{cornalba2021well} in Subsection \ref{app:pathwise}.

\begin{remark}\label{rem:1}
As discussed in \cite{cornalba2021well}, the process $u$ is defined on the entire probability space, but only solves \eqref{ridk-rho}--\eqref{ridk-j} with high-probability (i.e., on $F_{\nu}$). More precisely, upon modifying the noise factor $B_N$ by replacing the square root function with a smooth $h_\delta \in C^{\lceil d/2 \rceil + 2}$ such that $h_{\delta}(z) = \sqrt{|z|}$ for $|z|\geq \delta$, and performing a truncation on a $\mathcal{W}^s$-sphere with sufficiently large radius $k$ (this modified noise is denoted by $B_{N,\delta}$), then one has a mild solution defined on the whole probability space $\Omega$. On the set $F_{\nu}$, the noises $B_N$ and $B_{N,\delta}$ coincide, and we therefore say that $u$ satisfies the dynamics \eqref{ridk-rho}--\eqref{ridk-j} on the set $F_{\nu}$. The cut-off parameter $\delta$ is chosen so that $\min_{\f{x}\in D}{\rho_0(\f{x})}> \delta$.
\end{remark}

 \begin{remark}\label{JustifyScaling}
The well-posedness of \eqref{RidkAbstractRewriting} in \cite{cornalba2021well} (in terms of mild solutions) is related to the scaling regime $N\ep^\theta = 1$, with constraint $\theta > 2d$: this constraint boils down to the relation
 \begin{align}\label{BoundWsRIDK}
\mean{\|u(t)\|^2_{\mathcal{W}^{s}}} \propto N^{-1}\ep^{-2s-d},
\end{align}
which explicitly links the Sobolev space index $s$ to the critical threshold $\theta_c\coloneq 2s+d$, and the Sobolev embedding requirement $s>d/2$, which allows the embedding into the continuous functions. Form a physics perspective, the constraint $\theta > 2\theta$ implies particle overlap (as each particle's volume is proportional to $\ep^{d}$).

The scaling in \eqref{ScalingNEpsilon} is more restrictive, as we also want to account for: i) standard interpolation error estimates of our DG method, as will become apparent in \cref{BHA}, and ii) a stronger notion of solution (analytically strong solution), see \eqref{PathwiseSmoothedIntegrand} above. 
\end{remark}

\section{discontinuous galerkin framework }\label{GalerkinFramework}

We develop the weak form for the Raviart--Thomas mixed finite-element approximation in space, including the numerical flux.

\subsection{Basic notation}
Consider 
a tesselation $\mathcal{T} _{h}$ of $D= \mathbb{T}^d$ consisting of simplicial elements $K$ (triangles, tetrahedrons,...) with maximum side-length $h$. We denote 
by $\mathcal{E}_h$ the set of facets $e$ (edges of triangles, faces of tetrahedrons...) In dimension $d=1$, the simplicial elements are simply intervals.

For $q\in \mathbb{N}_0$, let polynomials of degree-$q$ on $K\subset D$ be denoted $\mathcal{P}_q(K)$.
For  $q\in \mathbb{N}$, denote the Raviart--Thomas elements of order $q$ (see, for instance, \cite[Chapter 3]{kirby2012common} or \cite{Brezzi1991-rz})  by $\mathcal{RT}_{q}(K)=(\mathcal{P}_{q-1}(K))^d+\vec x\, \mathcal{P}_{q-1}(K)$. Let $H(\mathrm{div}, D)\coloneq\{\f{v}\in [L^2(D)]^d\colon \nabla\cdot \f{v}\in L^2(D)\}$ and $\mathcal{RT}_q\coloneq\{\f{v}\in H(\mathrm{div},D)\colon \f{v}|_K \in \mathcal{RT}_q(K) \;\forall K \in \mathcal{T}_h\}$. Due to existence of the weak divergence, any $\f{v}\in \mathcal{RT}_q$ has continuous normal component across every $e\in\mathcal{E}_h$.
Let $\mathcal{DG}_q=\{v\in L^2(D)\colon v|_K \in \mathcal{P}_q(K)\; \forall K\in \mathcal{T}_h\}$.
We define the function spaces
\begin{align}
V &  \coloneq \left\{ u=(\rho,\vec j)\in \mathcal{W}^{\overline{s}}\colon \int_{\domain}{\rho(\f{x})\m \f{x}}=\mass \right\},\label{SpaceV}\\
V_0 & \coloneq \left\{ u=(\rho,\vec j)\in \mathcal{W}^{\overline{s}}\colon \int_{\domain}{\rho(\f{x})\m \f{x}}=0 \right\}\label{SpaceVZero},
\end{align}
for $\overline{s}$ as in \eqref{ScalingNEpsilon}, and where $\mass$ has beed defined in \eqref{ConservedMass}. Their DG counterparts are denoted by
  \begin{align}
  V_{h} & \coloneq \left\{\rho_h\in \mathcal{DG}_q\colon \int_D \rho_h(\vec x)\,\m\vec x=\mass \right\} \times \mathcal{RT}_{q+1},\label{SpaceVh}\\
 V_{h,0} & \coloneq \left\{\rho_h\in \mathcal{DG}_q\colon \int_D \rho_h(\vec x)\,\m\vec x=0 \right\} \times \mathcal{RT}_{q+1}.\label{SpaceVh0} 
 \end{align}
 
We denote the $L^2$-inner product (respectively, $L^2$-norm) by $(\cdot,\cdot)$ (respectively, by $\|\cdot\|$). Furthermore, we use the notation
\begin{align}\label{WeightedL2Norm}
\ip{(\rho,\f{j}),(\phi,\f{\psi})} \coloneq  k_BT(\rho,\phi) + (\f{j},\f{\psi}).
\end{align}

Finally, throughout the paper, we use the letted ``$\m$'' to indicate the standard It\^o differential.

\subsection{Derivation of weak form}\label{DerivationOfWeakForm}

In order to derive the weak form of~\eqref{ridk}, we multiply \eqref{ridk-rho} by a test function $\phi\colon K\to \reals$  and integrate over an element $K$ with boundary $\partial K$, obtaining
\[
\int_K \phi\,\m \rho \,\m \vec{x}%
= - \int_K \phi\, (\nabla \cdot \vec{j})\,\m\vec{x} \m t
=-\int_{\partial K}\phi \,(\vec{j}\cdot \vec{n})\,\m S\m t+\int_K (\nabla \phi) \cdot \vec{j}\,\m\vec{x}\m t,
\]
where $\vec n$ is the outward-pointing normal on  $K$. 
For piecewise-constant elements, where $\nabla \phi$ vanishes,  conservation of mass holds and
\begin{align*}
\m \int_K \,\rho \,\m\vec{x}
=-\int_{\partial K} (\vec{j}\cdot \vec{n})\,\m S\m t.
\end{align*}
With $\potential =0$ (for simplicity), testing \eqref{ridk-j}  with $\vec{\psi} \colon K\to \reals^d$ and then applying the divergence theorem entails
\begin{align*}
\int_K \vec{\psi} \cdot \m \vec{j}\,\m\vec{x}
&= -\gamma \int_K \vec{\psi}\cdot\vec j \,\m\vec{x}\m t
 -k_B T \int_K \vec{\psi} \cdot\nabla \rho\,\m\vec{x}\m t + \sigma\frac{1}{\sqrt N}\int_K \sqrt{\rho}\,(\vec{\psi}\cdot \,\m \vec{\xi}_\ep)\,\m\vec{x}\\
&= -\gamma \int_K \vec{\psi}\cdot\vec j \,\m\vec{x} \m t
-k_B T\int_{\partial K} (\vec{\psi}\cdot \vec n)\, \rho \,\m S\m t\\
&\quad+k_B T \int_K (\nabla\cdot\vec{\psi}) \,\rho\,\m\vec{x}\m t
  + \sigma\frac{1}{\sqrt N}\int_K \sqrt{\rho}\,(\vec{\psi}\cdot \m\vec{\xi}_\ep)\,\m\vec{x}.
\end{align*}
Under the condition that $(\vec \psi\cdot \vec n) \rho$ is continuous across $\partial K$, we can sum over $K\in\mathcal{T}_h$, drop the null contribution 
$
-k_B T\sum_{K}  \int_{\partial K}(\vec\psi\cdot \vec n) \,{\rho} \,\m S, 
$
and derive the following weak form: for initial condition $(\rho_0,\vec j_0)\in V$, find $(\rho, \vec j)\colon[0,T]\to V$ such that
\begin{align*}
(\phi, \m \rho)
&=-\sum_{K\in\mathcal{T}_h} \int_{\partial K}  \phi\,(\vec{j}\cdot \vec{n})\,\m S\m t+(\nabla \phi, \vec j)\m t,\\
(\vec{\psi},\m \vec{j})
&= -\gamma (\vec{\psi}, \vec j )\m t+k_B T(\nabla\cdot\vec\psi, \rho)\m t + \frac{\sigma}{\sqrt N} \left(\sqrt{\rho}\vec \psi, \m\vec \xi_\ep\right) 
\end{align*}
holds for any $(\phi,\f{\psi})\in V_0$. 
We keep the boundary terms in $\vec j$ as this is useful for the following analysis. 
Including the external potential $\potential$, we write the weak formulation as: find $u=(\rho,\vec j)\colon [0,T]\to V$ such that 
\begin{align}\label{cont_weak_form}
\m \ip{u(t),v}
=a(u(t),v)\m t +\ip{F(u(t)), v}\,\m t + \ip{B_N(u(t)) \m W_{\ep}, v},\qquad%
\forall v=(\phi,\vec \psi)\in V_0,
\end{align}
where we have defined the bilinear form $a$ as
\begin{align}\label{d:16}
  a(u,v) & \coloneq  -k_BT\sum_{K\in\mathcal{T}_h} \int_{\partial K}  \phi\,(\vec{j}\cdot \vec{n})\,\m S+ k_BT ( \nabla \phi, \vec j)\nonumber\\
  & \quad -\gamma (\vec{\psi}, \vec j ) + k_B T(\nabla\cdot\vec\psi, \rho).
\end{align}
Given the regularity of the solution $u$ in \cref{prop-pathwise}, such a solution $u$ also solves the weak formulation \eqref{cont_weak_form}, as detailed in the proof of \cref{prop-pathwise}.

\subsection{Discontinuous Galerkin method}\label{DiscontinuousGalerkinMethod}
For the discontinuous Galerkin method, we approximate $\rho$ and $\vec j$ by discontinuous functions with jump discontinuities on the facets $e\in\mathcal{E}_h$. Their 
 values must be assigned on the facets: in accordance with the notation in \cite{Arnold2000-bk}, we introduce the numerical flux $h_{\rho}^{e,K}$ (respectively, $\vec{h}_j^{e,K}$) to replace $\rho$ (respectively, $\vec j$) on the facet $e$ of the element $K$. These fluxes will be defined in Subsection \ref{sec_fluxes} below.
  Then, the weak form with numerical fluxes becomes: seek $(\rho_h, \vec j_h)\colon [0,T]\to V_h$ such that 
  \begin{subequations}
  \label{weak-formulation}
\begin{align}
(\phi_h, \m \rho_h) 
&=-\sum_{K\in \mathcal{T}_h} \int_{\partial K}  \phi_h\,\vec{h}_{j}^{e,K}\cdot \vec n\,\m S\m t+(\nabla \phi_h, \vec j_h)\m t,\\
(\vec{\psi}_h, \m \vec j_h)
&= -\gamma(\vec{\psi}_h, \vec j_h )\m t-(\nabla \potential\,\rho_h,\vec{\psi}_h)\,\m t -k_B T\sum_{K\in \mathcal{T}_h}  \int_{\partial K}(\vec\psi_h\cdot \vec n)  \,h_{\rho}^{e,K} \,\m S\m t\nonumber\\
& \quad+k_B T\,(\nabla\cdot\vec\psi_h, \rho_h)\m t
  + \frac{\sigma}{\sqrt N} \left(\sqrt{\rho_h}\vec \psi_h, \m \vec \xi_{h,\ep}\right)
\end{align}
\end{subequations}
for all $(\phi_h, \vec \psi_h)\in V_{h,0}$, where $\vec \xi_{h,\ep}$ is a suitable truncation of $\vec \xi_\ep$ to be specified later. 

\subsubsection{Definition of fluxes}\label{sec_fluxes}
We define numerical fluxes by solving the wave equation attained by neglecting the dissipation and noise components of~\eqref{ridk} in one-dimensional cross-sections normal to $e\in \mathcal{E}_h$. (following the method of Godunov~\cite{Toro1999-gg}, intuition in the one dimensional case is discussed in Remark \ref{intuition_fluxes} below). Specifically, we set
\begin{subequations}
\makeatletter
        \def\@currentlabel{Flux}
        \makeatother
        \label{flux}
        \renewcommand{\theequation}{Flux\arabic{equation}}   
\begin{align}
h_{\rho}^{e,K}
&\coloneq\{\rho\}+\frac{1}{2\sqrt{k_B T}}\jump{\vec j},\tag{Flux-$\rho$}\label{FluxRho}\\
 \vec{h}_j^{e,K}&
\coloneq\{\vec j\} + \frac{\sqrt{k_B T}}{2}\jump{\rho},
\tag{Flux-$\f{j}$}
\label{FluxJ}
\end{align}
\end{subequations}
where we have used $\{\cdot\}$ to denote the average value on either side of $e$ and $\jump{\phi}=2\{\phi \,\vec n\}$ or $\jump{\vec\psi}=2\{\vec \psi\cdot\vec n\}$ to denote the jump for scalar or vector quantities. The fluxes are consistent as $h_{\rho}^{e,K}\equiv\rho$ and $\vec{h}_{j}^{e,K}\equiv\vec j$ if $\rho$ and $\vec j$ are continuous (as jumps across edges are null, and average values across edges coincide with the values on the edge). If $K_+, K_-$ share a facet $e$, then
$h_\rho^{e,K_+}=h_\rho^{e,K_-}$ 
and $\vec{h}_j^{e,K_+}=\vec{h}_j^{e,K_-}$ and we may drop the $K$ superscript. 
For $\vec j\in \mathcal{RT}_{q+1}$, the normal components of $\vec j$ is continuous across $e\in \mathcal{E}_h$ and $h_{\rho}^e=\{\rho\}$.

\begin{remark}\label{intuition_fluxes} To give an intuition for the definitions \eqref{flux},
consider the non-dissipative linear part of the noise-free version of \eqref{ridk}, in dimension one, and without boundary conditions, namely
\begin{equation}
\rho_t=-j_x, \qquad j_t=-k_B T \rho_x, \qquad t>0, \,\,x\in\reals.%
\label{w1}
\end{equation}
Equation \eqref{w1} is a wave equation with wave speed $c=\sqrt{k_B T}>0$ in $\rho$, and initial conditions $\rho(0,x)=\rho_0(x)$ and $\rho_t(0,x)=-j_0'(x)$. Its general solution is $\rho(t,x)=A(x-c t) + B(x+c t)$ for $A,B\colon \reals \to \reals$. To match $A,B$ to the initial data, put $A+B=\rho_0$ and $-c A'+c B'=-j_0'$. Then $2 c B'=-j_0'+c\rho_0'$ and $B=(1/2c)( c\rho_0- j_0)$. Similarly, $A=(1/2c) (c\rho_0+ j_0)$.

To derive the flux for a discontinuous Galerkin method, consider initial data with a jump at $x=0$, namely,
\begin{equation}
  \rho_0(x,t)=\begin{cases} \rho_-,& x<0\\ \rho_+, &x\ge 0;\end{cases}\qquad
  j_0(x,t)=\begin{cases} j_-,& x<0,\\ j_+, &x\ge 0,\end{cases}\label{w2}
\end{equation}
for constants $\rho_\pm, j_\pm$. The solution to equation \eqref{w1} is then given by
\begin{align}
  \rho(x,t)\label{rho_1d}
  =\begin{cases} \rho_-,& x<-ct,\\ 
    \frac12(\rho_+ +\rho_-) + \frac{1}{2c}(j_+-j_-), &-ct<x\le ct,\\
    \rho_+, &ct<x,\end{cases}
    \end{align}
    and
    \begin{align}\label{j_1d}
  j(x,t)=\begin{cases} j_-,& x<-ct,\\ \frac12(j_+ + j_-)+\frac{c}{2}(\rho_+ -\rho_-), &-ct<x\le ct,\\ j_+,& ct<x.\end{cases}
  \end{align}
 Since $c=\sqrt{k_B T}$, the analogy between \eqref{rho_1d}--\eqref{j_1d} and \eqref{flux} is now apparent.
\end{remark}
 
\subsection{Weak formulation of \eqref{weak-formulation}} 
Taking once more into account that $\vec \psi_h\in \mathcal{RT}_{q+1}$ leads to $\jump{\vec\psi_h}=0$, the weak formulation \eqref{weak-formulation} reduces to: find $(\rho_h,\f{j}_h )\colon [0,T]\to V_h$  such that
\begin{subequations}
\makeatletter
        \def\@currentlabel{DG}
        \makeatother
        \label{dg_ridk}
        \renewcommand{\theequation}{DG\arabic{equation}}
\begin{align}
(\phi_h, \m \rho_h)\tag{DG-$\rho$}
&=-\sum_{ e\in \mathcal{E}_h} \int_{ e}  \jump{\phi_h}\cdot\vec h_j^{e}\,\m S\m t + (\nabla \phi_h, \vec j_h)\m t,\\
(\vec{\psi}_h, \m \vec j_h )
&= -\gamma (\vec{\psi}_h, \vec j_h )\m t -(\nabla \potential \rho_h,\vec{\psi}_h)\,\m t \nonumber \\
& \quad + k_B T(\nabla\cdot\vec\psi_h, \rho_h)\m t\nonumber  + \frac{\sigma}{\sqrt N} \left(\sqrt{\rho_h}\vec \psi_h, \m\vec \xi_{h,\ep}\right)\tag{DG-$\f{j}$}
\end{align}
\end{subequations}
for all $(\phi_h, \vec \psi_h)\in V_{h,0}$.
Defining the bilinear form $a_h$ with arguments $u_h=(\rho_h,\f{j}_h)$ and $v_h=(\phi_h,\f{\psi}_h)$
\begin{align}\label{ah}
a_h(u_h,v_h) \coloneq & -k_BT\sum_{ e\in \mathcal{E}_h} \int_{ e}  \jump{\phi_h}\cdot\vec h_j^{e}\,\m S + k_BT(\nabla \phi_h, \vec j_h)-\gamma (\vec{\psi}_h, \vec j_h ) \nonumber\\
&  +k_B T(\nabla\cdot\vec\psi_h, \rho_h),
\end{align}
and setting $W_{h,\ep}\coloneq (0,\vec\xi_{h,\ep})$, we can rewrite \eqref{dg_ridk} as follows: find $(\rho_h,\f{j}_h)=u_h\colon [0,T]\to V_h$ such that
\begin{align}\label{DiscreteWeakFormRIDK}
 \m \ip{u_h(t), v_h}%
  =a_h(u_h(t),v_h)\,\m t+\ip{F(u_h),v_h}\,\m t
  + \ip{B_N(u_h) \m W_{h,\ep}, v_h},\qquad%
   \forall v_h\in V_{h,0}.
\end{align}

\subsection{Rewriting \eqref{DiscreteWeakFormRIDK}}\label{DefnAhQh}
Let $Q_h$ be the projection operator on to $V_h$ with respect to the $\langle\cdot,\cdot\rangle$-inner product defined in \eqref{WeightedL2Norm} (explicitly, if $z\in L^2\times [L^2]^d$, then $\langle Q_hz,v_h\rangle=\langle z,v_h\rangle$ for all $v_h\in V_h$). Furthermore, for $z=(\rho,\vec j)\in \mathcal{W}^{\overline{s}} \cup \{\mathcal{DG}_q \times \mathcal{RT}_{q+1}\}$, we define $A_hz$ as the unique element of $V_{h,0}$ such that $\langle A_hu,v_h\rangle=a_h(u,v_h)$ for all $v_h\in V_{h,0}$.

Taking these definitions into account, as well as the smoothing of the noise integrand (i.e., using $B_{N\delta}$ instead of $B_N$), equation \eqref{DiscreteWeakFormRIDK} can be seen as the variational formulation of the abstract equation 
\begin{equation}
 \label{DiscreteRidkAbstractRewriting}
   \begin{aligned}
    \!\!\!\!\! \m u_h(t)&=A_hu_h(t)\,\m t +Q_hF(u_h)\,\m t
    +Q_hB_{N,\delta}(u_h(t))\,\m W_{h,\ep}(t), \\
    \!\!\!\!\!  u_h(0)&=u_{h,0},
    \end{aligned}
\end{equation}
where $u_{h,0}$ will be chosen below in \cref{ErrorEquationStochastic}. The well-posedness of \eqref{DiscreteRidkAbstractRewriting} is readily settled since the it is an SDE with smooth coefficients. 

\section{properties of linear setting}\label{linear_setting}

We prove relevant properties related to the bilinear forms $a$ (see \eqref{d:16}) by $a_h$ (see and \eqref{ah}). Specifically, we discuss:
\begin{itemize}
\item a suitable \emph{inf-sup} condition (also known as \emph{LBB} condition \cite{boffi2013mixed}), see \cref{nearly_coercive}, and
\item continuity, see \cref{bdd_ah}.
\end{itemize}

First, we define two useful norms, namely
\begin{align}\label{DaggerNorm}
    \norm{(\rho,\vec j)}^2_\dagger \coloneq 
    \begin{cases}\displaystyle
  \sum_{e\in \mathcal{E}_h}    \norm{\jump{\rho}}_{L^2(e)}^2 
      + \norm{\vec j}^2 + \norm{\nabla \cdot \vec j}^2,&\mbox{if }q=0,\\[1.1em]  
      \norm{\nabla \rho}^2 
      + \norm{\vec j}^2 + \norm{\nabla \cdot \vec j}^2,&\mbox{if }q>0,
    \end{cases}
\end{align}
and 
\begin{align}\label{StarNorm}
     \norm{(\rho,\vec j)}^2_\star \coloneq \norm{ \rho}^2+\sum_{ e\in \mathcal{E}_h} \norm{\jump{\rho}}^2_{L^2( e)} + \norm{\vec j}^2,
     \end{align} 
where we recall that $\|\cdot\|$ is the standard $L^2$ norm.

\begin{lemma}[\emph{inf-sup} condition for bilinear forms $a$ and $a_h$]\label[lemma]{nearly_coercive} We have the following statements:
  \begin{enumerate}[i)]
  \item 
  There exists a constant $C>0$  such that 
    \begin{align*}
      \inf_{e_h=(\rho_h,\vec j_h)\in V_{h,0}} \sup_{v_h \in V_{h,0}}
      \frac{\left|a_h(e_h,v_h)\right|}{\norm{v_h}_\star  \norm{e_h}_\star } %
      \ge C,
      \end{align*}
     where $V_{h,0}$ is given in \eqref{SpaceVh0}, and $\|\cdot\|_\star$ is defined in \eqref{StarNorm}.
     \item 
     There exists a constant $C>0$  such that 
     \begin{align*}
      \inf_{e_h=(\rho_h, \vec j_h)\in V_{h,0}}
   \sup_{v_h\in V_{h,0}} \frac{\left|a(e_h, v_h)\right|}{\norm{v_h} \norm{e_h}}\ge C,
    \end{align*}
    where $   \norm{e_h}^2 \coloneq \norm{ \rho_h}^2 + \norm{\vec j_h}^2$.
     \end{enumerate}
  \end{lemma}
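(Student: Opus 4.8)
The plan is to prove both inf-sup bounds by the same Brezzi-type construction: given $e_h=(\rho_h,\vec j_h)\in V_{h,0}$, I would exhibit an explicit test function $v_h\in V_{h,0}$ for which $|a_h(e_h,v_h)|\ge c\,\norm{e_h}_\star^2$ while $\norm{v_h}_\star\le C\,\norm{e_h}_\star$; dividing and taking the supremum over $v_h$, then the infimum over $e_h$, yields the claim with an $h$-independent constant. The test function is assembled from two pieces: one controlling $\vec j_h$ together with the facet jumps of $\rho_h$, the other controlling $\norm{\rho_h}$.

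For part (i), the first piece is $e_h$ itself. By \cref{lowerbound},
\[
a_h(e_h,-e_h)=\gamma\norm{\vec j_h}^2+\sum_{e\in\mathcal{E}_h}\int_e\frac{(k_BT)^{3/2}}{2}\jump{\rho_h}^2\,\m S,
\]
so testing against $-e_h$ controls $\norm{\vec j_h}^2$ and $\sum_e\norm{\jump{\rho_h}}_{L^2(e)}^2$, but nothing in $\norm{\rho_h}^2$. To recover the latter I would use the term $k_BT(\nabla\cdot\vec\psi_h,\rho_h)$ in \eqref{ah}. Since $\rho_h$ has zero mean and the divergence maps $\mathcal{RT}_{q+1}$ onto the zero-mean subspace of $\mathcal{DG}_q$ with $h$-uniform stability --- the classical discrete inf-sup (LBB) property of the Raviart--Thomas/DG pair on shape-regular meshes \cite{boffi2013mixed,Brezzi1991-rz} --- there is $\widehat{\vec\psi}_h\in\mathcal{RT}_{q+1}$ with $\nabla\cdot\widehat{\vec\psi}_h=\rho_h$ and $\norm{\widehat{\vec\psi}_h}\le C\norm{\rho_h}$. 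The second piece is $v_h^{(2)}\coloneq(0,\widehat{\vec\psi}_h)\in V_{h,0}$; its zero first component kills all $\phi_h$-terms and the facet integral in \eqref{ah}, leaving
\[
a_h(e_h,v_h^{(2)})=k_BT\norm{\rho_h}^2-\gamma(\widehat{\vec\psi}_h,\vec j_h)\ge\tfrac12 k_BT\norm{\rho_h}^2-C'\norm{\vec j_h}^2
\]
after a Young inequality.

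I would then set $v_h\coloneq-\beta\,e_h+v_h^{(2)}$ and choose the fixed parameter $\beta>0$ large enough that the gain $\beta\gamma\norm{\vec j_h}^2$ absorbs the defect $-C'\norm{\vec j_h}^2$; this gives $a_h(e_h,v_h)\ge c\,\norm{e_h}_\star^2$. The matching bound $\norm{v_h}_\star\le C\norm{e_h}_\star$ follows from the triangle inequality together with $\norm{\widehat{\vec\psi}_h}\le C\norm{\rho_h}$, since $\norm{v_h}_\star^2$ contributes $\beta^2\norm{\rho_h}^2$, $\beta^2\sum_e\norm{\jump{\rho_h}}_{L^2(e)}^2$ and $\norm{-\beta\vec j_h+\widehat{\vec\psi}_h}^2\le 2\beta^2\norm{\vec j_h}^2+2\norm{\widehat{\vec\psi}_h}^2$, each term bounded by a constant multiple of $\norm{e_h}_\star^2$ for fixed $\beta$. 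Part (ii) is then verbatim with $a$ replacing $a_h$ and the plain $L^2$-norm replacing $\norm{\cdot}_\star$: the only adjustment is in the first piece, where an element-by-element integration by parts (the divergence theorem, exactly as in \cref{lowerbound}) cancels the two boundary contributions of \eqref{d:16} and yields $a(e_h,-e_h)=\gamma\norm{\vec j_h}^2$, with no jump term --- consistent with the absence of jumps in the $L^2$-norm used there.

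The hard part is entirely the $h$-uniform stability constant $C$ of the divergence right-inverse $\widehat{\vec\psi}_h$; this is precisely the discrete LBB condition for Raviart--Thomas elements, which I would invoke from \cite{boffi2013mixed,Brezzi1991-rz} (proved there via a Fortin operator on shape-regular families) rather than reprove. Everything else is routine bookkeeping with Young's and triangle inequalities, plus the trivial observation that both $-\beta e_h$ and $(0,\widehat{\vec\psi}_h)$ have zero-mean first component and hence lie in $V_{h,0}$.
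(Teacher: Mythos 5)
Your proof is correct and follows essentially the same route as the paper: both build the test function as $e_h$ (up to sign and scaling) plus a pair $(0,\vec\kappa_h)$ with $\nabla\cdot\vec\kappa_h$ proportional to $\rho_h$ and $\norm{\vec\kappa_h}\le C\norm{\rho_h}$, use \cref{lowerbound} (and its jump-free analogue for $a$) to control $\norm{\vec j_h}$ and the facet jumps, and absorb the cross term $\gamma(\vec\kappa_h,\vec j_h)$ by Young's inequality. The only cosmetic difference is that you obtain the stable divergence right-inverse from the standard discrete LBB/Fortin-operator argument for the Raviart--Thomas/DG pair, whereas the paper takes the curl-free minimal-$L^2$ solution via the discrete Helmholtz decomposition; both yield the same $h$-uniform bound.
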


\begin{lemma}[continuity of bilinear forms]\label[lemma]{bdd_ah}
  \begin{enumerate}[i)]
  \item Suppose that $u_h=(\rho_h,\vec j_h)\in L^2(D)\times H(\mathrm{div}, D)$ and that $\rho_h$ is piecewise constant (thus choosing $q=0$). Then we have, for a constant $C>0$,
    \[
      \abs{a_h(u_h,v_h)}%
      \le C\norm{u_h}_\dagger\,\norm{v_h}_\star,\qquad \forall v_h\in V_{h,0},
      \]
      where $\norm{\cdot}_\star$ (respectively, $\norm{\cdot}_{\dagger}$) is defined in \eqref{StarNorm} (respectively, \eqref{DaggerNorm}).
        \item Fix $q>0$, and take $u_h=(\rho_h,\vec j_h)\in \{\mathcal{DG}_q \cap C^0(D)\} \times   \mathcal{RT}_{q+1} $. 
        Then there exists $C>0$ such that
     \[
      \abs{a(u_h,v_h)}%
      \le C\norm{u_h}_\dagger\,\norm{v_h},\qquad \forall v_h\in V_{h,0}.
      \] 
   \end{enumerate}
  \end{lemma}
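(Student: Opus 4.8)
The plan is to expand each bilinear form into its four defining terms and bound them one by one, the organising principle being that every inter-element boundary contribution is rewritten, by element-wise integration by parts followed by summation, into either a volume integral or a pure facet-jump pairing. The key tool is the standard DG summation identity $\sum_{K}\int_{\partial K}\phi\,(\vec v\cdot\vec n)\,\m S=\sum_{e\in\mathcal{E}_h}\int_e\big(\{\phi\}\jump{\vec v}+\jump{\phi}\cdot\{\vec v\}\big)\,\m S$, combined with the structural fact that $\vec j_h,\vec\psi_h\in H(\mathrm{div},D)$ (in particular the Raviart--Thomas fields) have continuous normal components, so $\jump{\vec j_h}=\jump{\vec\psi_h}=0$ and every $\{\phi\}\jump{\vec v}$-weighted contribution drops out. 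This is exactly the cancellation underlying \cref{lowerbound}, and it is what renders the facet terms tractable; the only difference here is that, since the test and trial arguments differ, the convenient self-adjoint cancellation of \cref{lowerbound} is unavailable and the terms must be estimated individually.

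For \emph{Part ii)} ($q>0$, form $a$) the argument is clean. The dissipation term satisfies $|\gamma(\vec\psi_h,\vec j_h)|\le\gamma\norm{\vec\psi_h}\,\norm{\vec j_h}\le C\norm{v_h}\,\norm{u_h}_\dagger$. The pair $-k_BT\sum_K\int_{\partial K}\phi_h(\vec j_h\cdot\vec n)\,\m S+k_BT(\nabla\phi_h,\vec j_h)$ collapses, on applying the identity above and integrating the volume term by parts, to $-k_BT(\phi_h,\nabla\cdot\vec j_h)$, the two facet contributions $\pm k_BT\sum_e\int_e\jump{\phi_h}\cdot\{\vec j_h\}$ cancelling because $\jump{\vec j_h}=0$; this is controlled by $\norm{\phi_h}\,\norm{\nabla\cdot\vec j_h}\le C\norm{v_h}\,\norm{u_h}_\dagger$. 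Finally, since $\rho_h$ is continuous (so $\jump{\rho_h}=0$) and $\jump{\vec\psi_h}=0$, integrating $k_BT(\nabla\cdot\vec\psi_h,\rho_h)$ by parts annihilates both boundary contributions and leaves $-k_BT(\vec\psi_h,\nabla\rho_h)$, bounded by $\norm{\vec\psi_h}\,\norm{\nabla\rho_h}\le C\norm{v_h}\,\norm{u_h}_\dagger$. Every factor that appears is one of the ingredients of $\norm{\cdot}_\dagger$ or the $L^2$-norm, so summing yields the bound with an $h$-independent constant.

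For \emph{Part i)} ($q=0$, form $a_h$) I would argue in the same spirit, now exploiting that $\phi_h,\rho_h$ are piecewise constant, so their element-wise gradients vanish. I split the flux $\vec h_j^e=\{\vec j_h\}+\tfrac12\sqrt{k_BT}\jump{\rho_h}$ into two parts. The $\{\vec j_h\}$-part, by the summation identity together with $\nabla\phi_h=0$, again reduces to $-k_BT(\phi_h,\nabla\cdot\vec j_h)$ and is bounded as above. The $\jump{\rho_h}$-part, $-\tfrac12(k_BT)^{3/2}\sum_e\int_e\jump{\phi_h}\cdot\jump{\rho_h}\,\m S$, is estimated directly by a facet Cauchy--Schwarz inequality against $\big(\sum_e\norm{\jump{\phi_h}}^2_{L^2(e)}\big)^{1/2}\le\norm{v_h}_\star$ and $\big(\sum_e\norm{\jump{\rho_h}}^2_{L^2(e)}\big)^{1/2}\le\norm{u_h}_\dagger$; this is precisely why the jump seminorms are built into both $\norm{\cdot}_\star$ and $\norm{\cdot}_\dagger$. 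The dissipation term is handled exactly as in Part ii).

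The main obstacle is the surviving term $k_BT(\nabla\cdot\vec\psi_h,\rho_h)$ in Part i). Integrating by parts (using $\nabla\rho_h=0$ and $\jump{\vec\psi_h}=0$) converts it into the facet pairing $k_BT\sum_e\int_e\jump{\rho_h}\cdot\{\vec\psi_h\}\,\m S$, whose vector factor is the normal trace of a Raviart--Thomas field. The $\jump{\rho_h}$ factor is controlled by $\norm{u_h}_\dagger$, but a naive discrete trace estimate for $\{\vec\psi_h\}\cdot\vec n$ costs a factor $h^{-1/2}$ that $\norm{v_h}_\star$, containing only $\norm{\vec\psi_h}$ and no facet norm of $\vec\psi_h$, cannot absorb. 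The resolution I would pursue exploits that $\nabla\cdot\vec\psi_h\in\mathcal{DG}_0$ is piecewise constant: in the situation where \cref{bdd_ah} is actually invoked within \cref{lemma:rge}, the first argument is the interpolation error $z-\mathcal I_{0,m}z$, whose scalar component is $L^2$-orthogonal to the piecewise constants, so that $(\nabla\cdot\vec\psi_h,\rho_h)=0$ outright. Securing this orthogonality (equivalently, taking $\mathcal I_0$ to act as the cell-average projection on the density component) is the essential point that delivers the $h$-independent constant, and it is where I would concentrate the effort.
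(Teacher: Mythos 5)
Your Part~ii) and the first three terms of your Part~i) reproduce the paper's argument essentially verbatim: element-wise integration by parts combined with the normal continuity of the Raviart--Thomas fields collapses the facet contributions, leaving $-k_BT(\phi_h,\nabla\cdot\vec j_h)-\gamma(\vec\psi_h,\vec j_h)-k_BT(\vec\psi_h,\nabla\rho_h)$ for $q>0$, and the volume term $-k_BT(\phi_h,\nabla\cdot\vec j_h)$ plus the jump--jump pairing $-(k_BT)^{3/2}\sum_e\int_e\jump{\phi_h}\cdot\jump{\rho_h}\,\m S$ plus the dissipation term for $q=0$; Cauchy--Schwarz against the ingredients of $\norm{\cdot}_\dagger$ and $\norm{\cdot}_\star$ then finishes those terms exactly as in the paper.

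Concerning the term $k_BT(\nabla\cdot\vec\psi_h,\rho_h)$ in Part~i): you are right that it is not controlled by $\norm{u_h}_\dagger\norm{v_h}_\star$, since for $q=0$ neither $\norm{\rho_h}$ appears in \eqref{DaggerNorm} nor $\norm{\nabla\cdot\vec\psi_h}$ in \eqref{StarNorm}, and a facet rewriting as $k_BT\sum_e\int_e\jump{\rho_h}\cdot\{\vec\psi_h\}\,\m S$ costs an $h^{-1/2}$ from the trace of $\vec\psi_h$, as you observe. Be aware that the paper's own proof does not resolve this either: its final Cauchy--Schwarz display lists only three of the four terms and silently drops $k_BT(\nabla\cdot\vec\psi_h,\rho_h)$, so you have located a genuine gap in the published argument rather than missed a trick it supplies. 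Your proposed repair --- pairing $\nabla\cdot\vec\psi_h\in\mathcal{DG}_0$ against a density component that is $L^2$-orthogonal to piecewise constants, which holds in the one place \cref{bdd_ah}(i) is invoked inside \cref{lemma:rge} provided the $q=0$ density interpolant is the cell-average projection --- is a sensible fix, but note that it proves a modified, application-specific statement rather than the lemma as written, and that the hypotheses would need restating: the lemma asserts that the \emph{trial} density $\rho_h$ is piecewise constant, whereas in the application it is the \emph{test} density $\phi_h$ that lies in $\mathcal{DG}_0$ while the trial argument is the (non-piecewise-constant) interpolation error $z-\mathcal I_{0,m}z$. Making the orthogonality hypothesis explicit, and fixing which argument is piecewise constant, is exactly where the effort should go.
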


  \begin{remark}
As will become apparent from the proof Lemma \ref{nearly_coercive}, the validity of this inf-sup condition follows from -- and further justifies -- the definition of the numerical fluxes \eqref{flux}. 
\end{remark}

\begin{proof}[Proof of Lemma \ref{nearly_coercive}] \emph{Part i)}. As the bilinear form $a_h$ is not coercive, we will need to make a special choice of $v_h$ to prove the result. 
  For $e_h=(\rho_h,\vec j_h)$ and $v_h=(\phi_h, \vec \psi_h)\in V_{h,0}$, the bilinear form $a_h(e_h, v_h)$ reads
  \begin{gather}
 \begin{split}   
  a_h(e_h, v_h)
   &=-k_BT\sum_{ e\in \mathcal{E}_h} \int_{ e}  \jump{\phi_h}\cdot \vec h_j^e \,\m S+k_BT (\nabla \phi_h, \vec j_h)\\
   &\quad -\gamma (\vec{\psi_h}, \vec j_h) + k_B T(\nabla\cdot\vec\psi_h, \rho_h).
   \end{split}\label{d:14}
  \end{gather}

Now we define $v_h$ as a special perturbation of $u_h$. More precisely,
 we put  $\phi_h=\rho_h$ and $\vec\psi_h=\vec j_h+ \vec \kappa_h$ 
 for some $\f{\kappa}_h\in\mathcal{RT}_{q+1}$ satisfying
 \begin{align}\label{DivergenceProblemKappa}
 -\nabla \cdot \vec \kappa_h=\eta\,\rho_h \quad\mbox{ on }D
 \end{align} 
 (note the solvability condition $\int_D \nabla\cdot \f{\kappa}_h\,\m \vec x=0$ for a continuous field $\vec \kappa_h$ on a periodic domain holds as $(\rho_h,\vec j_h)\in V_{h,0}$), where $\eta>0$ is to be specified. 
 
The equation \eqref{DivergenceProblemKappa} is, of course, underdetermined. We look for a solution $\f{\kappa}_h$ such that
\begin{align}\label{EmbeddingKappa}
\|\f{\kappa}_h\| \leq C\|\eta \rho_h \|,
\end{align}
is satisfied, with $C>0$ being some constant.

Since the divergence operator maps $\mathcal{RT}_{q+1}(K)$ onto $\mathcal{P}_q(K)$, the equation \eqref{DivergenceProblemKappa} admits at least a solution.
 If we in addition demand that $\vec \kappa_h$ is curl-free, then  $\vec{\kappa}_h \in\mathcal{RT}_{q+1}$ is uniquely defined (by the discrete Helmholz decomposition~\cite{Arnold2000-sz}). 
This is the minimum $L^2(D)$ solution to equation \ref{DivergenceProblemKappa} in $\mathcal{RT}_{q+1}$ and by uniqueness also in $H(\mathrm{div}, D)$: in particular, there exists a solution in $H^{1}(D)$ that satisfies \cref{EmbeddingKappa} (see \cite{BourgainBrezis2003}).

 The extra terms due to $\vec\kappa_h$ (i.e., the terms making up the difference $a_h(u_h,v_h-u_h)$) amount to
  \begin{align}\label{eq_1000}
 & -\gamma (\vec \kappa_h, \vec j_h )
- \eta k_B T \|\rho_h\|^2.
  \end{align}
  Furthermore, we have the identity
   \begin{align}\label{a_u_u}
 a_h(u_h,u_h)
  =
  -\gamma (\vec{j}_h, \vec j_h)
      -\sum_{ e\in \mathcal{E}_h}\int_{ e}  \frac{(k_B T)^{3/2}}{2}\jump{\rho_h}^2\,\emph{d}S.  
  \end{align}
  The identity \eqref{a_u_u} is shown as follows: The flux choice \eqref{flux} and the continuity property of $\vec j_h$ give
 \[   
      \jump{\rho_h} \cdot\vec{h}_j^e=\jump{\rho_h}\cdot \left(\{\vec j_h\} + \frac{\sqrt{k_B T}}{2}\jump{\rho_h}\right)
      =\jump{\rho_h}\cdot\{\vec j_h\} + \frac{\sqrt{k_B T}}{2}\jump{\rho_h}^2.
      \]
    The divergence theorem implies
  \[
    \int_K \nabla \rho_h\cdot \vec j_h\,\m\vec x
    +\int_K (\nabla\cdot\vec j_h) \rho_h\,\m\vec x
    =\int_{\partial K} \rho_h\, \vec j_h\cdot \vec n\,\m S.
    \]
    Taking $\rho_h=\phi_h$ and $\f{j}_h=\f{\psi}_h$ in \eqref{ah} entails
    \begin{align*}
    a_h(u_h,u_h) & = -\gamma (\vec{j}_h,\vec j_h )\\
&\quad      -\sum_{ e\in \mathcal{E}_h}\int_{ e}\pp{k_B T \jump{\rho_h} \cdot\vec j_h + \frac{(k_B T)^{3/2}}{2}\jump{\rho_h}^2  + k_B T \vec j_h\cdot \jump {\rho_h}}\,\m S\\
      &= -\gamma (\vec{j}_h,\vec j_h)
      -\sum_{ e\in\mathcal{E}_h}\int_{ e}  \frac{(k_B T)^{3/2}}{2}\jump{\rho_h}\cdot\jump{\rho_h}\,\m S,
    \end{align*}
and \eqref{a_u_u} is settled.
  
    Combining \eqref{a_u_u} and \eqref{eq_1000}, we deduce
  \begin{align*}
    a_h(e_h,v_h)\le & -\gamma \norm{\vec j_h}^2 - \gamma (  \vec \kappa_h, \vec j_h)\\
    & - \sum_{\in\mathcal{E}_h} \int_{e} \frac{(k_B T)^{3/2}}{2}\jump{\rho_h}^2\,\m S -\eta k_B T   \|\rho_h\|^2.
  \end{align*}  
  The Cauchy--Schwartz inequality and the bound \eqref{EmbeddingKappa} promptly give
  \begin{align*}
  -\gamma \norm{\vec j_h}^2 - \gamma \int_D \vec \kappa_h \cdot \vec j_h\m \vec x 
 &  \le  -\gamma \norm{\vec j_h}^2 + \frac 12\gamma \norm{\vec\kappa_h}^2 + \frac 12 \gamma\norm{\vec j_h}^2\\
  & \leq -\frac 12\gamma \norm{\vec j_h}^2 + \frac 12 \gamma K^2 \eta^2 \norm{\rho_h}^2.
  \end{align*}
We conclude that 
\begin{align*}
a_h(e_h,v_h)\le & -\frac 12 \gamma\norm{\vec j_h}^2+\left(-k_B T \eta  + \frac 12 \gamma  K^2  \eta^2\right)  \norm{\rho_h}^2\\
&-\sum_{e\in\mathcal{E}_h}\int_{ e} \frac 12 (k_B T)^{3/2} \jump{\rho_h}\cdot\jump{\rho_h}\,\m S.
\end{align*}
Set $\eta \coloneq k_B T/(\gamma K^2)$. Then,  $-C_1\coloneq -k_B T \eta  + \frac 12 \gamma  K^2  \eta^2<0$. Thus,
\[
|a_h(e_h,v_h)|\ge
C_1  \norm{\rho_h}^2 + C\sum_{e\in \mathcal{E}_h} \norm{\jump{\rho_h}}^2_{L^2( e)} + \frac 12\gamma\norm{\vec j_h}^2.  
\]
Also, 
\begin{align*}
\norm{v_h}_\star^2
& = \norm{\rho_h }^2 
+ \sum_{e\in\mathcal{E}_h} \norm{\jump{\rho_h}}^2_{L^2( e)}
+ \norm{\vec j_h+\vec \kappa_h}^2\\
& \le (1+K^2 \eta^2)  \norm{\rho_h}^2 + \sum_{e\in\mathcal{E}_h} \norm{\jump{\rho_h}}^2_{L^2( e)}
 + \norm{\vec j_h}^2.
\end{align*}
Putting all together, we obtain
\[
  \sup_{v_h\in V_{h,0}}\frac{|a_h(e_h,v_h)|}{\norm{v_h}_\star}\ge \frac{C_1  \norm{\rho_h}^2
  + C\sum_{e\in\mathcal{E}_h} \norm{\jump{\rho_h}}^2_{L^2( e)} +\frac 12 \gamma \norm{\vec j_h}^2}{(1+K^2\eta^2)\norm{u_h}_\star}\ge C\norm{e_h}_\star,\]
and \emph{Part i)} is settled. As for \emph{Part ii)}, the only difference is the lack of the boundary terms for $\rho_h$ (we consider $a_h$ instead of $a$). Therefore, we get the same result, only with $\|\cdot\|$ replacing $\|\cdot\|_{\star}$.
\end{proof}

 \begin{proof}[Proof of Lemma \ref{bdd_ah}]
    
For \emph{Part i)}, $q=0$ and $\phi_h$ is piecewise constant.
Therefore, we obtain
  \begin{align*}
  a_h(u_h, v_h)
   &=-k_BT\sum_{ e\in\mathcal{E}_h} \int_{ e}  \jump{\phi_h}\cdot\vec{h}_j^{ e} \,\m S - \gamma ( \vec{\psi_h}, \vec j_h)\\
   & \quad  -k_B T\sum_{e}  \int_{\partial e} 2\jump{\vec\psi_h}\,h_\rho^e \,\m S+k_B T(\nabla\cdot\vec\psi_h, \rho_h)\\
   &=-k_BT\sum_{ e\in\mathcal{E}_h} \int_{ e}  \jump{\phi_h}\cdot\{\vec j_h\}\,\m S-(k_BT)^{3/2}\sum_{ e\in\mathcal{E}_h} \int_{ e}  \jump{\phi_h}\cdot\jump{\rho_h}\,\m S\\
   & \quad -\gamma (\vec{\psi_h}, \vec j_h) + k_B T (\nabla\cdot\vec\psi_h, \rho_h),
   \end{align*}
   where we have used the continuity of $\vec j_h, \vec\psi_h$. As $\phi_h$ is piecewise constant, applying the divergence theorem in the first term in the right-hand-side above yields
   \begin{align*}  
    a_h(u_h, v_h)
   &=- k_BT (\nabla \cdot \vec j_h, \phi_h )  -(k_BT)^{3/2}\sum_{ e\in\mathcal{E}_h} \int_{ e}  \jump{\phi_h}\,\jump{\rho_h}\,\m S\\
   & \quad - \gamma (\vec\psi_h , \vec j_h )
     + k_B T (\nabla \cdot \vec \psi_h, \rho_h).
   \end{align*}
 The Cauchy--Schwartz inequality promptly gives
    \begin{align*}
     |a_h(u_h, v_h)| & \leq C(k_BT,\gamma)\pp{\|\nabla\cdot \vec j_h\|\|\phi_h\|  +\sum_{\smash{e\in \mathcal{E}_h
     }}\norm{\jump{\phi_h}}_{L^2(e)} \norm{\jump{\rho_h}}_{L^2(e)}+ \|\vec\psi_h\|\|\vec j_h\|}\\
     & \leq \norm{u_h}_\dagger\,\norm{v_h}_\star,
    \end{align*}
    and \emph{Point i)} is settled. As for \emph{Point ii)}, we use the divergence theorem, \eqref{d:16}, and the continuity of $\rho_h$ to obtain
         \begin{align*}
    a(u_h, v_h)
   & = - k_BT (\nabla \cdot \vec j_h, \phi_h ) - \gamma (\vec\psi_h , \vec j_h) - k_BT (\vec \psi_h, \nabla \rho_h),
   \end{align*}
   and the proof is concluded by application of the Cauchy--Schwarz inequality.
    \end{proof}

\section{Linear error analysis}\label{ErrorAnalysisLinearPDE}

For the zero potential ($\potential=0$) and deterministic ($\sigma=0$) problem, we quantify the error arising from approximating $a$ (see \eqref{d:16}) by $a_h$ (see and \eqref{ah}).
Let $\overline{s}$ be as in \eqref{ScalingNEpsilon}. For any $z=(\rho,\vec j)\in\mathcal{W}^{\overline{s}-1}$ with mass $\mass$ (i.e., $\int_{\domain}{\rho}(\vec x)\m \vec x = \mass$), we define the Ritz--Galerkin projection $R_hz$ as the unique element of $V_{h}$ such that
\[a(R_h z, v_h )=a(z,v_h),\qquad \forall v_h\in V_{h,0}.\]
Similarly, we define the projection $\tilde{R}_hz$ as the unique element of $V_{h}$ such that
\[a_h(\tilde{R}_h z, v_h )=a_h(z,v_h),\qquad \forall v_h\in V_{h,0}.\]
For $q\geq 0$, and $z=(\rho,\vec j)$ being sufficiently regular, let $\mathcal{I}_q z$ denote the canonical interpolation operator defined component-wise on $\mathcal{CG}_q$ and $\mathcal{RT}_{q+1}$. Furthermore, let $\mathcal{I}_{q,m} z$ be the same as $\mathcal{I}_q z$, but with the first component shifted so as to have the same mass as $\rho$.

The main result of this section is the following.

\begin{lemma}[Ritz--Galerkin error]\label[lemma]{lemma:rge}
For $z=(\rho,\vec j)\in C^{0}\cap\mathcal{W}^{\overline{s}-1}$ with mass $\mass$, there exists a constant $C>0$ such that
 \begin{align}\label{RhBounds}
& \max\left\{\norm{z-\tilde{R}_h z};\norm{z-R_h z}\right\}  \le 
 \begin{cases}
  C h^{1/2} \norm{z}_{\mathcal{W}^{2}},\qquad &\mbox{ if } q=0,\\[1.1em]
  C h^{q} \norm{z}_{\mathcal{W}^{q+2}}, \qquad & \mbox{ if } q> 0.
 \end{cases}
\end{align}
\end{lemma}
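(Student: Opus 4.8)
The plan is to bound the two projection errors $\|z-\tilde R_h z\|$ and $\|z-R_h z\|$ by inserting a computable interpolant (namely $\mathcal{I}_{q,m}z$, which preserves the mass $\mass$ and hence lands in $V_h$) and then exploiting the defining Galerkin orthogonality of each projection together with a coercivity/inf-sup estimate for the relevant bilinear form. Concretely, I would write
\[
\|z-\tilde R_h z\|\le \|z-\mathcal{I}_{q,m}z\| + \|\mathcal{I}_{q,m}z-\tilde R_h z\|,
\]
and likewise for $R_h$. The first term on the right is a pure interpolation error, controlled by the standard estimates recalled in Section~\ref{InterpolationErrors}: component-wise one gets $O(h^{q+1})$ on $\mathcal{CG}_q$ and the matching Raviart--Thomas estimate on $\mathcal{RT}_{q+1}$, with the mass-shift contributing only a lower-order (indeed exponentially small or $O(h^{q+1})$) correction, so this term is already of the claimed order (and better than $h^{1/2}$ when $q=0$).

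The crux is the second term $e_h\coloneq \mathcal{I}_{q,m}z-\tilde R_h z\in V_{h,0}$. Here I would use that $\tilde R_h z$ satisfies $a_h(\tilde R_h z,v_h)=a_h(z,v_h)$ for all $v_h\in V_{h,0}$, so that
\[
a_h(e_h,v_h)=a_h(\mathcal{I}_{q,m}z-z,v_h),\qquad \forall v_h\in V_{h,0}.
\]
The natural choice is $v_h=e_h$. The difficulty is that $a_h$ is \emph{not} coercive: it is a wave-type (skew-adjoint plus dissipative) form, and in the deterministic noise-free setting the $-\gamma(\vec\psi_h,\vec j_h)$ term only controls the $\vec j$-component. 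To extract control of both components I expect to need an inf-sup (Babu\v{s}ka--Ne\v{c}as) argument rather than straightforward coercivity: one bounds $\|e_h\|$ by $\sup_{v_h}a_h(e_h,v_h)/\|v_h\|$ and then estimates the right-hand side $a_h(\mathcal{I}_{q,m}z-z,v_h)$ term by term. The volume terms $k_BT(\nabla\phi_h,\vec\eta)$ and $k_BT(\nabla\cdot\vec\psi_h,\eta_\rho)$, where $(\eta_\rho,\vec\eta)=\mathcal{I}_{q,m}z-z$, are handled by the interpolation estimates; the subtle part is the facet term $-k_BT\sum_e\int_e \jump{\phi_h}\cdot\vec h_j^e\,\m S$, which brings in the flux \eqref{FluxJ} and hence a jump $\jump{\rho}$ plus the edge values of $\vec j$. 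These facet contributions are what degrade the rate: using a trace inequality $\|w\|_{L^2(e)}\lesssim h^{-1/2}\|w\|_{L^2(K)}+h^{1/2}\|\nabla w\|_{L^2(K)}$ on each element, the edge terms lose half a power of $h$, which is exactly what produces the $h^{1/2}$ (rather than $h^{1}$) rate in the lowest-order case $q=0$, while for $q>0$ the higher interpolation order absorbs this loss and leaves $h^{q}$.

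The two projections require slightly different bookkeeping because $R_h$ is defined through the exact form $a$ (which still contains the true-solution facet integral $\int_{\partial K}\phi(\vec j\cdot\vec n)$) whereas $\tilde R_h$ uses $a_h$ with the numerical flux; however, on the discrete space $V_{h,0}$ the test functions $\vec\psi_h\in\mathcal{RT}_{q+1}$ have continuous normal components and $\jump{\vec\psi_h}=0$, so the two forms agree on the relevant terms and the same inf-sup/trace machinery applies verbatim, giving the common bound displayed in \eqref{RhBounds}. The main obstacle I anticipate is establishing the discrete inf-sup stability of $a_h$ on $V_{h,0}$ uniformly in $h$ with the weighted inner product $\ip{\cdot,\cdot}$; once that is in hand, the error estimate follows by combining the interpolation bounds with the trace-inequality treatment of the flux term, and taking the maximum over the two projections.
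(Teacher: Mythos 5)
Your proposal follows essentially the same route as the paper: insert the mass-corrected interpolant $\mathcal{I}_{q,m}z$, use Galerkin orthogonality of the projection, invoke the inf-sup condition for $a$ (resp.\ $a_h$) in place of coercivity, and control the resulting right-hand side via continuity of the bilinear form in the $\dagger$-norm together with the interpolation estimates, with the facet/trace terms accounting for the reduced $h^{1/2}$ rate at $q=0$. The inf-sup stability you flag as the main obstacle is exactly the content of the paper's \cref{nearly_coercive}, so your argument matches the paper's proof in both structure and substance.
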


The proof of \cref{lemma:rge} relies on \cref{nearly_coercive}, \cref{bdd_ah}, as well as on the following standard interpolation estimate for the norm $\|\cdot\|_{\dagger}$ introduced in \eqref{DaggerNorm}.

\begin{theorem}[Interpolation error]\label[theorem]{BHA} 
  Assume that $\mathcal{T}_h$ is shape regular with mesh width $h$.  There exists a constant $c>0$ such that 
    \begin{enumerate}[i)]
      \item   for $q=0$, 
      \[\norm{z-\mathcal I_0 z}_\dagger \leq c\, h^{1/2}\, \norm{\rho}_{H^1(D)}
      +c\,h\,\bp{\norm{\vec j}_{H^{1}(D)}+\norm{\nabla\cdot\vec j}_{H^1(D)}},
      \]
      \item  for $\mathbb{N}\ni q\geq 1$, 
      \[
      \norm{z-\mathcal I_q z}_\dagger %
      \le c\,h^{q} \,\norm{\rho}_{H^{q+1}(D)}+c\,h^{q+1}\,\bp{\norm{\vec j}_{H^{q+1}(D)}+\norm{\nabla\cdot\vec j}_{H^{q+1}(D)}},
      \]
    \end{enumerate} 
    for all $z=(\rho,\vec j)\in \mathcal{W}^{q+2}$. 
  \end{theorem}
\begin{proof}[Proof of \cref{BHA}] 
Consider $q>0$. Let $\pi_q$ denote the projection onto the degree-$q$ piecewise continuous Lagrange interpolant of degree $q$ on $\mathcal T_h$ (with a standard set of interpolation points \cite[Definition 3.3]{Logg2012-xi}). For $\rho\in H^{q+1}(D)$, standard approximation theory  gives that
  \[
  \norm{\rho-\pi_q \rho}_{H^1(D)}%
  \le C\,h^{q} \,\norm{\rho}_{H^{q+1}(D)}
  \]
(e.g.,  \cite[Eq. (3.12)]{Logg2012-xi}). Hence, $\norm{\nabla \rho - \nabla \pi_q \rho}_{L^2(D)} \le c\, h^q\, \norm{\rho}_{H^{q+1}(D)}$. 

Consider $q=0$. Let $\pi_0$ denote the projection onto the piecewise constant interpolant on $\mathcal T_h$. Let $K\in \mathcal{T}_h$ have boundary $\partial K$ (which consists of a fixed number of $e\in \mathcal{E}_h$). We obtain
  \begin{align*}
  \norm{\jump{\rho-\pi_0\rho}}_{L^2(\partial K)}&
  \leq C \norm{\rho-\pi_0 \rho}_{L^2(K)}^{1/2}\,\norm{\rho-\pi_0 \rho }_{ H^{1}(K)}^{1/2}\\
  &
  \leq C h^{1/2}\,\norm{\rho }_{ H^{1}(K)}^{1/2},
 \end{align*}
 where the first inequality follows from \cite[T1.6.6]{Brenner2008-jg}.
Summing over the facets $e\in \mathcal{E}_h$ gives
\begin{align*}
\sum_{e\in \mathcal{E}_h}  \norm{\jump{\rho-\pi_0\rho}}^2_{L^2(e)}&
  \leq C \sum_{K\in \mathcal{T}_h}  h\,\norm{\rho}^2_{H^1(K)} 
  = Ch\,\norm{\rho }^2_{H^1(D)}.  
\end{align*}

We now turn to terms in $\norm{\cdot}_\dagger$ that involve $\vec j$.
Let $\Pi_{\mathcal{RT}} \vec j$ denote the canonical interpolant in $\mathcal{RT}_{q+1}$ (this is the $\vec j$-component of $\mathcal{I}_q z$). 
Then,
$$
\norm{ \vec j - \Pi_{\mathcal{RT}} \vec j}_{L^2(D)} 
\le C\, h^{q+1}\, \norm{\vec j}_{H^{q+1}(D)}$$
by standard approximation theory for Raviart--Thomas elements (e.g., \cite[Chapter III, Proposition 3.6 with $k=q$]{Brezzi1991-rz}). 
The divergence operator commutes with interpolation in the  sense  that $\nabla \cdot \Pi_{\mathcal{RT}} \vec j$ equals the projection of $\nabla \cdot \vec j$ onto  $\mathcal{DG}_q$. This leads to
\begin{align*}
  \norm{ \nabla\cdot \vec j - \nabla\cdot \Pi_{\mathcal{RT}} \vec j}_{L^2(D)}
  & \le C\, h^{q+1}\, \norm{\nabla\cdot \vec j}_{H^{q+1}(D)}
\end{align*}
(e.g., \cite[Chapter III, Proposition 3.8]{Brezzi1991-rz}). 
Due to the definition of $\norm{\cdot}_\dagger$, this completes the proof.
\end{proof}

\begin{proof}[Proof of \cref{lemma:rge}]
We first treat the case $q>0$. 
     For any $y_h\in V_{h}$, we have $R_h z -y_h \in V_{h,0}$ and, by the inf-sup condition in \cref{nearly_coercive}(ii), 
    \begin{align*}
    \norm{R_h z-y_h} \le & \sup_{w_h\in V_{h,0}} \frac{\abs{a(R_h z-y_h, w_h)}}{\norm{w_h}}.
  \end{align*}
  By definition of $R_h$, it holds $a(R_h z, w_h)=a(z,w_h)$ for all $w_h\in V_{h,0}$. Hence,
  \begin{align*}
    \norm{R_h z-y_h}
    \le & \sup_{w_h\in V_{h,0}} \frac{|a(z-y_h,w_h)|}{\norm{w_h}}.
  \end{align*}
  We now choose $y_h\coloneq \mathcal I_{q,m} z$.
   Since $z\in C^0$ and $y_h\in C(D)\times \mathcal{RT}_{q+1}$, we can exploit the continuity of $a$ (\cref{bdd_ah}) and deduce
  \begin{align}\label{d:100}
    \norm{R_h z- \mathcal I_{q,m} z } 
    \le \norm{z- \mathcal I_{q,m} z}_\dagger  \sup_{w_h\in V_{h,0}}\frac{\norm{w_h}}{\norm{w_h}} = \norm{z- \mathcal I_{q,m} z}_\dagger.
  \end{align}
  We deduce that
  \begin{align}\label{L2NormR_hProjection}
\norm{z-R_h z} &\le \norm{z- \mathcal{I}_{q,m} z}+\norm{R_h z - \mathcal{I}_{q,m} z} \nonumber \\
&  \stackrel{\mathclap{\eqref{d:100}}}{\le} \,\, \norm{z- \mathcal{I}_{q,m} z}+\norm{z- \mathcal{I}_{q,m} z}_\dagger.
\end{align}
It is easy to see that the difference of $\mathcal{I}_q z$ and $\mathcal{I}_{q,m} z$ is bounded by the right-hand-side of \eqref{RhBounds}.
All is left to do is apply \cref{BHA} to achieve the desired estimate. The same proof also applies when $R_h$ is replaced by $\tilde{R}_h$.

In the case $q=0$, we define $R_h$ using the $a_h$ form (so $R_h$ and $\tilde{R}_h$ coincide in this particular case): in this case, the inf-sup condition as described in \cref{nearly_coercive}(i) holds with respect to the $\norm{\cdot}_\star$-norm \eqref{StarNorm},  which equals the $L^2$-norm plus the norm of jumps in $\rho$ on facets. The continuity described in \cref{bdd_ah}(i) now gives, following the above argument,
\begin{align}\label{d:100a}
  \norm{R_h z- \mathcal I_{0,m} z }_\star
  \le \norm{z- \mathcal I_{0,m} z}_\dagger \sup_{w_h\in V_{h,0}} \frac{\norm{w_h}_\star}{\norm{w_h}_\star} =  \norm{z- \mathcal I_{0,m} z}_\dagger,
\end{align}
where the $\dagger$-norm \eqref{DaggerNorm} is defined in terms of jumps of $\rho$ rather than $\nabla \rho$. This leads to 
\begin{align*}
  \norm{z-R_h z}_\star &\le \norm{z- \mathcal I_{0,m} z}_\star+\norm{R_h z - \mathcal I_{0,m} z}_\star\\
  & \stackrel{\mathclap{\eqref{d:100a}}}{\le} \,\,\, \norm{z- \mathcal I_{0,m} z}_\star+\norm{z- \mathcal I_{0,m} z}_\dagger \le \norm{z- \mathcal I_{0,m} z} + \norm{z- \mathcal I_{0,m} z}_\dagger.
  \end{align*}
 Once again, the difference of $\mathcal{I}_q z$ and $\mathcal{I}_{q,m} z$ is trivially bounded as per the right-hand-side of \eqref{RhBounds}.
Using \cref{BHA} once again completes the proof.
\end{proof}

\begin{remark}\label{RemarkGeneralMass}
The definitions of $R_h,\tilde{R}_h$, as well as the statement of \cref{lemma:rge} can be modified in a straightforward way to allow for a constraint with arbitrary mass. However, we prefer to stick to a notation which reflects the fact that \eqref{ridk} conserves mass.
\end{remark}

\section{error analysis for ridk}\label{ErrorAnalysisRIDK}

We now turn to the error analysis of the DG approximation for  \eqref{ridk}. We work with the abstract systems \eqref{PathwiseSmoothedIntegrand} and \eqref{DiscreteRidkAbstractRewriting} for the RIDK solution $u$ and its semi-discrete DG approximation $u_h$. 

   \begin{lemma}[error equation]\label[lemma]{ErrorEquationStochastic}
    Set $e_h \coloneq R_h u - u_h$, where $u$ (respectively, $u_h$) solves \eqref{PathwiseSmoothedIntegrand} (respectively, \eqref{DiscreteRidkAbstractRewriting}) with initial datum $u_0$ (respectively, $u_{h,0}\coloneq R_hu_0$). 
    Then 
   \begin{align}\label{ErrorEquation}
     e_h(t) &= \int_0^t e^{A_h (t-s)}\,\emph{\m}(Q_h(R_h -I)u) + \int_0^t e^{A_h (t-s)}\, A_h(I-R_h)\,u\,\emph{\m} s \nonumber\\
     &\quad+ \int_0^t e^{A_h (t-s)}\, Q_h(F(u_h)-F(u))\,\emph{\m} s \nonumber\\
    &\quad+ \int_0^t e^{A_h (t-s)}\, Q_h(B_{N,\delta}(u_h)-B_{N,\delta}(u))\,\emph{\m} W_{h,\ep}(s) \nonumber\\
    & \quad + \int_0^t e^{A_h (t-s)}\, Q_hB_{N,\delta}(u)\,(\emph{\m}W_{\ep}-\emph{\m}W_{h,\ep})(s).
   \end{align}
 \end{lemma}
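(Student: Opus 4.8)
The plan is to derive the error equation by subtracting the two abstract evolution equations and then projecting onto the discrete space, treating $e_h=R_hu-u_h$ as a solution of an inhomogeneous linear SDE driven by the operator $A_h$. First I would write out the mild/variational form of both dynamics. The discrete approximation $u_h$ solves \eqref{DiscreteRidkAbstractRewriting}, so in differential form $\m u_h = A_hu_h\,\m t + Q_hF(u_h)\,\m t + Q_hB_{N,\delta}(u_h)\,\m W_{h,\ep}$. For the continuous solution $u$ solving \eqref{PathwiseSmoothedIntegrand}, I would apply $R_h$ and add-and-subtract $A_h R_h u$ to manufacture the same leading operator: writing $\m(R_hu) = A_h R_h u\,\m t + [\,\m(Q_h(R_h-I)u) + A_h(I-R_h)u\,\m t + Q_hF(u)\,\m t + Q_hB_{N,\delta}(u)\,\m W_{\ep}\,]$. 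The key algebraic manipulation is to recognise that $R_h$ commutes with the temporal integration only up to the correction terms $Q_h(R_h-I)u$ and $A_h(I-R_h)u$; the former captures the mismatch between applying $R_h$ and the $\langle\cdot,\cdot\rangle$-projection $Q_h$ that implicitly defines the discrete inner-product dynamics, while the latter is the familiar $A_h(R_h-I)u$ consistency defect from the Ritz--Galerkin framework of section \ref{ErrorAnalysisLinearPDE}.

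Next I would subtract the two differential forms to obtain
\begin{align*}
\m e_h = A_h e_h\,\m t &+ \m(Q_h(R_h-I)u) + A_h(I-R_h)u\,\m t \\
&+ Q_h(F(u)-F(u_h))\,\m t + Q_hB_{N,\delta}(u)\,\m W_{\ep} - Q_hB_{N,\delta}(u_h)\,\m W_{h,\ep},
\end{align*}
where the sign conventions in the forcing terms will need to be tracked carefully against the statement \eqref{ErrorEquation}. The final display in the lemma groups the noise contributions as $Q_h(B_{N,\delta}(u_h)-B_{N,\delta}(u))\,\m W_{h,\ep}$ together with $Q_hB_{N,\delta}(u)(\m W_{\ep}-\m W_{h,\ep})$, so I would split the last two terms above accordingly: add and subtract $Q_hB_{N,\delta}(u)\,\m W_{h,\ep}$ to separate the difference-of-integrands (driven by the truncated noise $W_{h,\ep}$) from the difference-of-noises (the truncation error $W_{\ep}-W_{h,\ep}$ acting on the fixed integrand $B_{N,\delta}(u)$).

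Having arranged $e_h$ as the solution of a linear equation $\m e_h = A_he_h\,\m t + \m\Phi$ with $\Phi$ collecting all forcing terms, I would invoke the variation-of-constants / Duhamel formula for the $C_0$-semigroup $\{e^{A_ht}\}$ generated by the bounded operator $A_h$ on the finite-dimensional space $V_{h,0}$, using $e_h(0)=R_hu_0-u_{h,0}=0$ by the choice $u_{h,0}\coloneq R_hu_0$. This yields $e_h(t)=\int_0^t e^{A_h(t-s)}\,\m\Phi(s)$, which is exactly \eqref{ErrorEquation} once $\Phi$ is substituted. I would justify the stochastic Duhamel formula either by noting that $A_h$ is a bounded linear operator on the finite-dimensional space $V_{h,0}$ (so the semigroup is the matrix exponential and the convolution is classical), or by an It\^o-product argument differentiating $s\mapsto e^{A_h(t-s)}e_h(s)$.

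The main obstacle I anticipate is bookkeeping rather than analysis: correctly identifying the \emph{two} distinct consistency terms $\m(Q_h(R_h-I)u)$ and $A_h(I-R_h)u\,\m t$ and verifying that they arise precisely from the interplay between the Ritz--Galerkin projection $R_h$ (defined via the continuous form $a$) and the discrete generator $A_h$ (defined via $a_h$ and $Q_h$). Since $R_h$ is defined through $a$ but the discrete evolution uses $A_h$ built from $a_h$, the defect $A_h(R_h-I)u$ does \emph{not} vanish, unlike in the standard Galerkin setting where $R_h$ is defined by the same form; tracking this mismatch, together with the noise-truncation splitting, is where I would be most careful.
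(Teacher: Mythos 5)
Your proposal is correct and follows essentially the same route as the paper: subtract the two differential forms, use $Q_hR_h=R_h$ and the flux-consistency identity $Q_hAu=A_hu$ to isolate the two defect terms $\m(Q_h(R_h-I)u)$ and $A_h(I-R_h)u\,\m s$, split the noise by adding and subtracting $Q_hB_{N,\delta}(u)\,\m W_{h,\ep}$, and conclude by Duhamel with $e_h(0)=0$. The sign question you flag in the $F$- and $B$-difference terms is an inconsistency internal to the paper (its own proof produces $F(u)-F(u_h)$ while the statement records $F(u_h)-F(u)$) and is immaterial, since only squared norms of these terms are used downstream.
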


\begin{proof}
  Set $\mathcal L u\,\m s\coloneq \m u-A u\,\m s$ and $\mathcal L_h u\,\m s\coloneq \m u-A_h u\,\m s$. 
  We obtain
  \begin{align*}
  Q_h \mathcal {L} u\,\m s - \mathcal{L}_h u_h\,\m s & = Q_h ( F(u) - F(u_h))\,\m s+Q_h(B_{N,\delta}(u)-B_{N,\delta}(u_h))\,\m W_{h,\ep}(s)\\
  & \quad + Q_hB_{N,\delta}(u)(\m W_\ep-\m W_{h,\ep})(s) \eqcolon \m \eta.  
  \end{align*}
  This implies that $\mathcal L_h u_h \,\m s= Q_h \mathcal L u\,\m s-\m \eta$ and
  \[\mathcal{L}_h (R_h u - u_h)\,\m s%
  =\mathcal L_h R_h u \,\m s- Q_h\mathcal L u\,\m s+\m \eta.
  \]
Substituting for $\mathcal{L}_h$ and $\mathcal{L}$, we obtain 
\[\mathcal{L}_h (R_h u - u_h)\,\m s%
  =\m (R_h u - Q_h u) + (Q_h A u - A_h R_h u)\,\m s +\m\eta.
  \] 
 Moreover, we have the consistency equality 
 \begin{align}\label{consistency}
 Q_h A u = A_h u.
 \end{align} 
 To show \eqref{consistency}, it is sufficient to show that $\langle Q_h A u, v_h\rangle= \langle A u, v_h\rangle= \langle A_h u, v_h\rangle$ for all $v_h \in V_{h,0}$. Since $u$ is continuous, $h_{\rho}^{e,T}=\{\rho\}$ and $\vec{\jmath}_{h,e}\cdot \vec n_+=\{\vec j\}\cdot\vec n_+$, so the forms agree, and \eqref{consistency} is settled. 
 
 Combining \eqref{consistency} with the fact that $Q_h R_h=R_h$ and that 
 and $A_h\tilde{R}_h = A_h$, we have shown that
  \[\mathcal{L}_h (R_h u-u_h)\,\m s=  \m(Q_h(R_h -I)u) + A_h(\tilde{R}_h-R_h)\,u\m s + \m \eta.
  \]  
  Let $e_h=R_h u - u_h$. By definition of $\mathcal{L}_h$, we see that $\m e_h - A_h e_h\,\m s= \tau_h\,\m s+\m\eta$ for $\m\tau _h = \m (Q_h(R_h -I) u) + A_h(\tilde{R}_h-R_h)\,u\m s$. Putting everything together gives \eqref{ErrorEquation}.
  \end{proof}

 \begin{proposition}[error bound]\label[proposition]{ErrorBoundStochastic}
  Let the assumptions of \cref{prop-pathwise} and \cref{ErrorEquationStochastic} be satisfied. Assume the validity of the scaling \eqref{ScalingNEpsilon}, which is 
  \begin{align*}
N\ep^{\theta} = 1, \qquad \theta \geq 2\overline{s}+ d,\qquad\mbox{for some }\overline{s} > \max\{d/2+1; q+3\},
\end{align*}
where $q$ is the order of the DG discretisation. 
  Suppose that $u_0\in \mathcal{W}^{\overline{s}}$, and that $F$ is Lipschitz continuous with respect to the $\mathcal{W}^{\overline{s}}$-norm. 
  Finally, define $W_{h,\ep}$ as the truncation of the noise in \eqref{eq:unreg-noise} over the index set 
  \begin{align}\label{TruncationIndexSet}
  \{\f{j}\in \mathbb{Z}^d\colon |\f{j}|_1 \leq \ep^{-1}|\ln(h^{2\tilde{q}})|\},
  \end{align}
   where $\tilde{q}$ is as in \eqref{ConvergenceOrderSquared}.
     Then we have the estimate
     \begin{align}
     \sup_{0\le t\le T}\label{ErrorBound}
    & \mean{\norm{R_hu(t) - u_h(t) }^2_{L^2(\mathbb{T}^d)}}  \le C(\delta,T,d) \left\{1+ \mean{\|u_0\|^2_{\mathcal{W}^{\overline{s}} }}\right\} e^{C_2\left(\mathcal{V},T\right)}\;h^{2\tilde{q}},
     \end{align}
      \end{proposition}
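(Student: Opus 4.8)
The plan is to start from the mild error representation \eqref{ErrorEquation} for $e_h=R_hu-u_h$, take $L^2$-norms, square, pass to expectation and $\sup_{t\le T}$, and estimate the five contributions separately before closing with Grönwall's inequality. The workhorse throughout is the \textbf{stability of the discrete semigroup}: testing $a_h(v_h,v_h)$ against $v_h=(\rho_h,\vec j_h)$ and using the flux cancellations underlying \eqref{flux} gives $a_h(v_h,v_h)\le 0$, so $A_h$ is dissipative and $\{e^{A_h t}\}_{t\ge 0}$ is a contraction in the weighted inner product \eqref{WeightedL2Norm}, hence uniformly bounded on $L^2\times[L^2]^d$. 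Consequently every term may be estimated by pulling $e^{A_h(t-s)}$ out of the norm and integrating in time (or, for the stochastic integrals, by the Itô isometry).

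First I would treat the two deterministic terms. For $\int_0^t e^{A_h(t-s)}\,\m(Q_h(R_h-I)u)$ I expand the differential through the governing equation \eqref{PathwiseSmoothedIntegrand}, since $Q_h(R_h-I)$ is a fixed bounded operator: $\m(Q_h(R_h-I)u)=Q_h(R_h-I)(Au+F(u))\,\m s+Q_h(R_h-I)B_{N,\delta}(u)\,\m W_\ep$. The drift piece is controlled in $L^2$ by $\norm{(R_h-I)(Au+F(u))}$; applying \cref{lemma:rge} to $z=Au$ (whose $\rho$-component has zero mass, cf. \cref{RemarkGeneralMass}) yields the bound $Ch^{\tilde q}\norm{u}_{\mathcal W^{q+3}}$, and it is precisely the extra derivative carried by $A$ that forces the regularity requirement $\overline s>q+3$ in \eqref{ScalingNEpsilon}. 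The stochastic piece is deferred to the Itô-isometry estimates below.

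The genuinely delicate term is $\int_0^t e^{A_h(t-s)}A_h(I-R_h)u\,\m s$, and I expect this to be \textbf{the main obstacle}. Here $A_h$ is applied to a projection error, and a naive bound through an inverse inequality would cost a full power of $h$ and destroy the rate. The resolution is to use the defining Galerkin orthogonality of the Ritz projection, $a(u-R_hu,v_h)=0$ for all $v_h\in V_{h,0}$, to write
\begin{align*}
\langle A_h(I-R_h)u,v_h\rangle=a_h(u-R_hu,v_h)=(a_h-a)(u-R_hu,v_h).
\end{align*}
Because the fluxes \eqref{flux} are consistent (reducing to the exact traces on continuous fields), the consistency defect $a_h-a$ collapses onto facet jump terms, which are controlled by the facet seminorm of $u-R_hu$ at order $h^{\tilde q}$ by the interpolation estimates already used in \cref{lemma:rge}. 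Taking the supremum over $v_h$ gives $\norm{A_h(I-R_h)u}\le Ch^{\tilde q}\norm{u}_{\mathcal W^{q+2}}$ with no loss of power, and the semigroup bound converts the time integral into a harmless factor of $T$.

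It remains to handle the three stochastic/nonlinear terms. For $\int_0^t e^{A_h(t-s)}Q_h(F(u_h)-F(u))\,\m s$ and $\int_0^t e^{A_h(t-s)}Q_h(B_{N,\delta}(u_h)-B_{N,\delta}(u))\,\m W_{h,\ep}$ I would use, respectively, the $L^2$-Lipschitz bound $\norm{F(u_h)-F(u)}\le\norm{\nabla\potential}_{\infty}\norm{u_h-u}$ and the Lipschitz continuity of $B_{N,\delta}$ (with constant depending on $\delta$), the Itô isometry, and the splitting $\norm{u_h-u}\le\norm{e_h}+\norm{R_hu-u}$; these produce the self-referential quantity $C\int_0^t\mean{\norm{e_h(s)}^2}\,\m s$ that Grönwall will absorb and that generates the factor $e^{C_2(\potential,T)}$. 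The noise-truncation term $\int_0^t e^{A_h(t-s)}Q_hB_{N,\delta}(u)\,(\m W_\ep-\m W_{h,\ep})$ is the second key estimate: by the Itô isometry its second moment is bounded by the Hilbert--Schmidt mass of $B_{N,\delta}(u)$ carried by the discarded Fourier modes $|\vec j|_1>\ep^{-1}|\ln(h^{2\tilde q})|$, and the Gaussian-type decay of the von Mises covariance coefficients makes this tail smaller than $Ch^{2\tilde q}$, which is exactly what motivates the truncation set \eqref{TruncationIndexSet}. Finally, collecting the five squared-norm estimates, bounding $\sup_{s\le T}\mean{\norm{u(s)}^2_{\mathcal W^{\overline s}}}\le C\,(1+\mean{\norm{u_0}^2_{\mathcal W^{\overline s}}})$ via the regularity propagation of \cref{prop-pathwise} (again using $\overline s>q+3$), and applying Grönwall's inequality yields \eqref{ErrorBound}, with the $\delta$-dependence of $B_{N,\delta}$ entering the prefactor $C(\delta,T,d)$ and the Lipschitz constant of $F$ entering the exponential $e^{C_2(\potential,T)}$.
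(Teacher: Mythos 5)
Your overall architecture matches the paper's: the same five-term splitting of $e_h$ from \cref{ErrorEquationStochastic}, dissipativity of $A_h$ (via \cref{lowerbound}) to control the discrete semigroup, \cref{lemma:rge} applied to $Au$ for the first term (correctly identifying why $\overline{s}>q+3$ is needed), the It\^o isometry plus Lipschitz bounds for the $F$- and $B_{N,\delta}$-difference terms, \cref{LemmaNoiseTailBound} for the truncation tail, and Gr\"onwall to close. Those four terms are handled essentially as in the paper.

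The gap is in your treatment of the term $\int_0^t e^{A_h(t-s)}A_h(I-R_h)u\,\m s$, which you correctly flag as the delicate one but then dispatch with an unjustified claim. Writing $\langle A_h(I-R_h)u,v_h\rangle=(a_h-a)(u-R_hu,v_h)$ is fine, and the defect does collapse to the facet pairing $-(k_BT)^{3/2}/2\,\sum_{e\in\mathcal{E}_h}\int_e\jump{\phi_h}\cdot\jump{\rho-(R_hu)_\rho}\,\m S$. But to conclude $\norm{A_h(I-R_h)u}\le Ch^{\tilde q}\norm{u}_{\mathcal{W}^{q+2}}$ you must bound this pairing by the \emph{volume} $L^2$-norm $\norm{v_h}$, which requires a discrete trace (inverse) inequality costing $h^{-1/2}$, together with a facet-jump estimate of order $h^{\tilde q+1/2}$ for the \emph{Ritz projection} error. \cref{lemma:rge} supplies neither: for $q>0$ it only controls the plain $L^2$-norm of $z-R_hz$, and for $q=0$ the $\star$-norm bound gives jumps of order $h^{1/2}$ only, so the arithmetic $h^{-1/2}\cdot h^{1/2}=h^0$ visibly loses the full rate $\tilde q=1/2$. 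The paper sidesteps this entirely: since $A_h\tilde R_h=A_h$, the term equals $\int_0^t e^{A_h(t-s)}A_h(\tilde R_h-R_h)u\,\m s$, and the It\^o identity \eqref{e:1000c} for $g_3(s,z)=e^{(t-s)A_h}(\tilde R_h-R_h)z$ integrates by parts in time, converting it into $-(\tilde R_h-R_h)u(t)+e^{A_ht}(\tilde R_h-R_h)u(0)+\int_0^te^{A_h(t-s)}(\tilde R_h-R_h)\,\m u$. No $A_h$ ever acts on a projection error, and each piece is bounded by the plain $L^2$ estimate of \cref{lemma:rge}. You would need either to adopt this time-integration-by-parts device or to prove the missing facet-jump superconvergence of the Ritz error; as written, your estimate for this term does not follow from the results available in the paper.
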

  \begin{proof}
Lemma \ref{ErrorEquationStochastic} and \eqref{e:1000b} give
   \begin{align*}
     \mean{\norm{e_h(t)}^2} & \leq C\left\{\mean{\norm{\int_0^t e^{A_h (t-s)}\,Q_h(R_h -I)\m u}^2}\right.\\
    & \quad\quad + \mean{\norm{\int_0^t e^{A_h (t-s)}\, A_h(\tilde{R}_h-R_h)\,u\,\m s}^2} \\
     &\quad\quad + \mean{\norm{\int_0^t e^{A_h (t-s)}\, Q_h(F(u_h)-F(u))\,\m s}^2}\\
    &\quad\quad + \mean{\norm{\int_0^t e^{A_h (t-s)}\, Q_h(B_{N,\delta}(u_h)-B_{N,\delta}(u))\,\m W_\ep(s)}^2} \\
    &\quad\quad + \left.\mean{\norm{\int_0^t e^{A_h (t-s)}\, Q_hB_{N,\delta}(u)\,(\m W_\ep-\m W_{h,\ep})(s)}^2}\right\} \eqcolon  \sum_{i=1}^{5}{T_i}.
   \end{align*}
    We estimate the five terms separately.
  
  \emph{Term $T_1$}. We use \cref{lemma:rge}, \cite[Proposition 4.36]{Da-Prato2014a}, as well as \eqref{BoundWsRIDK} to write
  \begin{align*}
  T_1 & \leq \int_0^t \mean{\norm{e^{A_h (t-s)}\,Q_h(R_h -I)Au}^2} \m s \\
  & \quad + \mean{\norm{\int_0^t e^{A_h (t-s)}\,Q_h(R_h -I)B_{N,\delta}(u)\m W_\ep(s)}^2}\\
  & \leq  Ch^{2\tilde{q}}\int_0^t\mean{\|Au\|^2_{\mathcal{W}^{q+2}}}\m s + \mean{\int_0^t \norm{e^{A_h (t-s)}\,Q_h(R_h -I)B_{N,\delta}(u)}^2_{L^0_2(\mathcal{W}^s)}\m s}\\
  & \leq  N^{-1}\ep^{-2(q+3)-d}Ch^{2\tilde{q}}\int_0^t\mean{\|u\|^2_{\mathcal{W}^{q+3}}}\m s \\
  & \quad + \ep^{-1}h^{2\tilde{q}}\left(\int_0^t \mean{\|B_{N,\delta}(u)\|^2_{\mathcal{W}^{q+2}}}\m s\right) \\
  & \leq N^{-1}\ep^{-2(q+3)-d} C(\delta,T)\left\{1 + \mean{\|u_0\|^2_{\mathcal{W}^{q+3}}}\right\}h^{2\tilde{q}}.
  \end{align*}
  where $\delta$ is the cut-off level of the stochastic integrand $B_{N,\delta}$.
  
   \emph{Term $T_2$}. Using \eqref{e:1000c}, \cref{lemma:rge}, as well as \eqref{BoundWsRIDK}, term $T_2$ is estimated as
     \begin{align*}
  T_2 & = \mean{\norm{\int_0^t e^{A_h (t-s)}\, A_h(\tilde{R}_h-R_h)\,u\,\m s}^2}\\ 
  & \leq C\left\{\mean{\norm{(\tilde{R}_h-R_h)u(t)}^2} + \mean{\norm{\exp(A_ht)(\tilde{R}_h-R_h)u(0)}^2} \right.\\
  & \left.\quad + \mean{\norm{\int_0^t \exp(A_h(t-s))(\tilde{R}_h-R_h)\m u}^2} \right\}\\
  & \leq N^{-1}\ep^{-2(q+3)-d} C(\delta,T)\mean{\|u_0\|^2_{\mathcal{W}^{q+3}}}h^{2\tilde{q}}.
  \end{align*}
  
   \emph{Term $T_3$}. The Lipschitz continuity of $F$, \cref{lemma:rge}, \cite[Proposition 4.36]{Da-Prato2014a} and \eqref{BoundWsRIDK} allow us to deduce
   \begin{align*}
   T_3 & \leq C(T)\int_0^t \mean{\norm{F(u_h(s))-F(u(s))}^2}\,\m s \\
   & \leq C(F,T)\int_0^t \mean{\norm{u_h(s)-u(s)}^{2}}\,\m s \\
   & \leq C(F,T)\left\{\int_0^t \mean{\norm{u(s)-R_h u(s)}^2}\,\m s + \int_0^t \mean{\norm{R_h u(s) - u_h(s)}^2}\m s\right\}\\
  & \leq N^{-1}\ep^{-2(q+2)-d}C(F, T)\mean{\|u_0\|^2_{\mathcal{W}^{q+2}}}h^{2\tilde{q}} \\
  & \quad + C(T,F)\int_0^t \mean{\norm{e_h(s)}^2}\m s.
   \end{align*}
     
  \emph{Term $T_4$}. Since $B_{N,\delta}$ is a Lipschitz approximation of the square root, 
  \cref{lemma:rge} and \eqref{BoundWsRIDK} allow us to deduce   
     \begin{align*}
   T_4 & \leq \int_0^t \mean{\|B_N(u_h(s))-B_N(u(s))\|^2_{L^0_2}}\,\m s \\
   & \leq C(\delta)N^{-1}\ep^{-1}\left\{\int_0^t \mean{\norm{u(s)-R_h u(s)}^2}\,\m s + \int_0^t \mean{\norm{R_h u(s) - u_h(s)}^{2}}\,\m s\right\} \\
  & \leq N^{-1}\ep^{-2(q+2)-d}h^{2\tilde{q}}C(\delta, T) \mean{\|u_0\|^2_{\mathcal{W}^{q+2}}} + CN^{-1}\ep^{-1}\int_0^t \mean{\norm{e_h(s)}^2}\,\m s.
   \end{align*}
   
  \emph{Term $T_5$}. Lemma \ref{LemmaNoiseTailBound} promptly implies that 
     \begin{align}\label{NoiseTruncationError}
  Tr_{L^2(D)}(W_\ep-W_{h,\ep}) \leq \ep^{-d}h^{2\tilde{q}}.
  \end{align} 
  Combining \eqref{NoiseTruncationError} with \eqref{BoundWsRIDK}, and also using the scaling \eqref{ScalingNEpsilon}, we obtain
  \begin{align*}
  T_5 & \leq N^{-1}\ep^{-d}C(d,T)\mean{\|u_0\|^2} h^{2\tilde{q}} \leq C(d,T)\mean{\|u_0\|^2} h^{2\tilde{q}}.
  \end{align*}
Combining all contributions, we obtain
     \begin{align*}
     \mean{\norm{e_h(t)}^2} & \leq N^{-1}\ep^{-2(q+3)-d}C(\delta,T,d)\mean{\|u_0\|^2_{\mathcal{W}^{q+3}}}h^{2\tilde{q}} \\
     & \quad + C_2(F,T)N^{-1}\ep^{-1}\int_0^t \mean{\norm{e_h(s)}^2}\m s,
     \end{align*}     
     and the proof is completed by using the Gronwall lemma and the scaling \eqref{ScalingNEpsilon}.
\end{proof}

\section{Modelling for low-density regime}\label{Positivity}

We propose and discuss  modifications to \eqref{ridk} which address the positivity issue of the density $\rho$. The first modification is applied in all cases: we turn off the noise and potential for $\rho\le 0$ by introducing $\rho^+ \coloneq \max\{\rho,0\}$.
\begin{subequations}
  \makeatletter
  \begin{align*}
    \frac{\partial\rho}{\partial t}
    &=-\nabla \cdot \vec{j}, \\ 
    \frac{\partial \vec{j}}{\partial t}
    &= -\gamma\,\vec{j} - k_BT\,\nabla\rho-\nabla \potential \rho^+ + \sigma\,\frac{1}{\sqrt N}\,\sqrt{\rho^+}\,\vec\xi_\ep.  
  \end{align*}
\end{subequations}
On a modelling basis, these terms do not make sense (as there are no particles). Analytically, the square root $\sqrt{\rho}$ is not well-defined for $\rho<0$ and a regularisation of this type is already part of the well-posedness theory.

\subsection{Extra diffusion}\label{sec:ridk-mix-coeff}

The most obvious way of regularising \eqref{ridk} for positivity is to add extra diffusion to the equation for $\rho$ (i.e., adding the term $D_0\Delta\rho$, $D_0>0$, to the $\rho$-equation) so as to get a strongly damped wave equation. 
Such a system is easy to analyse and maintains the conservation of mass in $\rho$. Additionally, it is easy to simulate and, for large diffusion, it is observed numerically to have positive solutions. See \cref{fig:ridk-mix-coeff} for a one-dimensional example with $D_0=0.5$. This approach leads to very smooth profiles and the stochastic dynamics have largely been lost. There is no obvious way of choosing the diffusion constant $D_0$ and, for example, with a smaller diffusion  $D_0=0.1$ in \cref{fig:diffusion_small}, the density profile becomes negative in some regions of space. More investigations are needed for this correction.

\begin{figure}[h]
\centering
\includegraphics{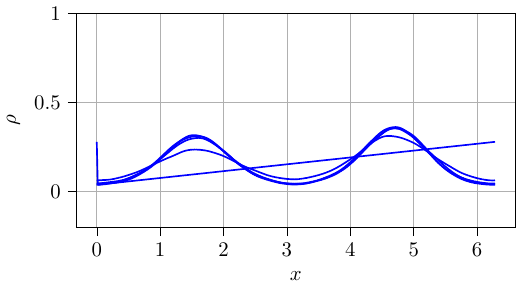}
\caption{Example profiles of $\rho$ from solving \eqref{ridk} with extra diffusion $D_0\Delta\rho$, with $D_0=0.5$. The density profiles of $\rho$ are smooth and always positive in this simulation.}
\label{fig:ridk-mix-coeff}
\end{figure}

\begin{figure}[h]
  \centering
  \includegraphics{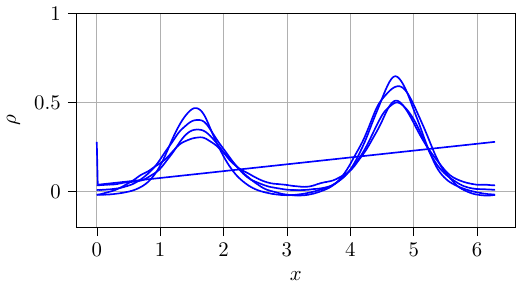}
  \caption{Example profiles of $\rho$ from solving \eqref{ridk} with extra diffusion $D_0\Delta\rho$, with $D_0=0.1$. For large times,  $\rho$ takes some negative values.} 
  \label{fig:diffusion_small}
  \end{figure}

\subsection{Density-dependent time-scales}\label{sec:RIDK-switch}

In this approach, we separate the time scale for the position and momentum in the particle system. In the low-density regime, we speed up the dynamics in the momentum $\vec p$ in a way that causes particles to move more quickly to equilibrium. Intuitively, this causes any excessive momentum density, which may lead to non-negativity, to dissipate. We are able to quantify this analytically and  present a maximum principle-based argument that guarantees non-negativity in \cref{PositivitySwitchHeatRIDK}.  

For the derivation, consider $(\vec p_i,\vec q_i)$ following Langevin dynamics
\[\m \vec q_i=\vec p_i\,\m t,\qquad \m\vec p_i=-\gamma \vec p_i\,\m t +\sigma \,\m \beta_i(t),\]
for i.i.d. Brownian motions $\beta_i(t)$ (we exclude the potential $\potential$ for simplicity). In the derivation of \eqref{ridk}, we now replace $\vec j_\epsilon(\cdot,t)=N^{-1}\sum_i \vec p_i(t) w_\epsilon(\cdot-\vec q_i(t))$ by $\vec J_\epsilon(\cdot,t)=\vec j_\epsilon(\cdot,\tilde{t})$ for $\m\tilde{t}/\m t=1/\varphi_\tau$ for a function $\varphi_\tau>0$ to be specified. Then, we obtain
$$\m\vec J_\epsilon/\m t=N^{-1} \sum_{i=1}^{N} (\m \vec p_i/\m t)\, w_\epsilon(\cdot-\vec q_i) + \vec p_i\, w'_\epsilon(\cdot-\vec q_i) \m \vec q_i/\m t.
$$ 
As $\m \vec p_i(\tilde{t})=-(\m \tilde{t}/\m t)\, \gamma \,\vec p_i(\tilde{t})\,\m t +\sigma \sqrt{\m \tilde{t}/\m t}\,\m \beta_i(t)$, the equation for $\vec J_\epsilon$ becomes
\begin{align}\label{SpeedUpForJ}
  \m \vec J_\epsilon=\left(-\gamma \vec J_\epsilon -k_B \nabla \rho_\epsilon\right)\frac{1}{\varphi_\tau}\,\m t +\sigma\frac{1}{\sqrt{N\varphi_\tau}
}\sqrt{\rho_\epsilon}\,\m \vec \xi_\ep.
\end{align}
When used in the derivation of RIDK, $(\rho_\ep,\vec J_\ep)$ lead to the following variation in the unknown $(\rho,\vec J)\approx (\rho_\ep,\vec J_\ep)$
\begin{subequations}
\begin{align*}
\frac{\partial\rho}{\partial t}
&=-\nabla \cdot \vec J, \\ 
\varphi_\tau(\rho)\frac{\partial \vec J}{\partial t}
&= -\gamma\,\vec J - k_BT\,\nabla\rho -\nabla \potential\,\rho^+ + \sigma\,\frac{1}{\sqrt N}
\,\sqrt{\rho\,\varphi_\tau(\rho)}\,\vec\xi_\ep,
\end{align*}  
\end{subequations}
which is exactly \eqref{Mod}.
Now, dropping the requirement $\varphi_\tau>0$ in \eqref{Mod}, we choose $\varphi_\tau$ to be a smooth monotonic function taking value 0 (respectively, value 1) on the interval $(-\infty, \tau/2)$ (respectively, $(\tau,\infty)$).

We see in \cref{fig:switch-rho} an example of the behaviour of this system with $\tau=0.2$ (all other parameters are as in the simulations for \cref{fig:intro}) and observe non-negative profiles for the density $\rho$.

\begin{figure}[h]
\centering
\includegraphics{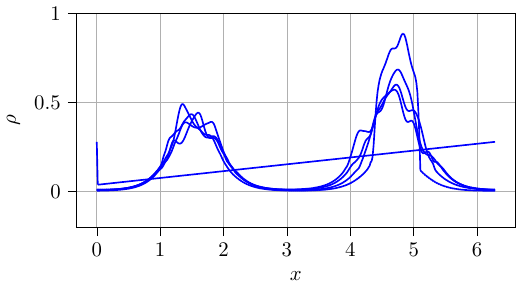}
\caption{Example profiles of $\rho$ from solving \eqref{Mod} with $\tau=0.2$. The density profiles for $\rho$ are always positive in this simulation.}

\label{fig:switch-rho}
\end{figure}

Provided suitable regularity assumption are satisfied (see Remark \eqref{assumptions_positivity}) we can prove that \eqref{Mod} guarantees positivity in the following continuous setting.

\begin{proposition}\label[proposition]{PositivitySwitchHeatRIDK}
Assume that a space- and time-continuous solution $(\rho,\vec J)$ to \eqref{Mod} exists, and that $(\rho,\vec J)$ is twice differentiable in space and once differentiable in time in the region $\{(\f{x},t)\colon \rho(\vec x, t)<0\}$.

Then, provided that $\rho_0>0$, we have $\inf_{\f{x}\in D, t\in[0,T]}\rho(\f{x},t) \geq 0$.
\end{proposition}
\begin{remark}
It is to be noted that the choice of $\varphi_\tau$ in \eqref{Mod} implies that the dynamics of \eqref{ridk} and \eqref{Mod} are identical as long as $\rho \geq \tau$ uniformly.
\end{remark}
\begin{proof}[Proof of Proposition \ref{PositivitySwitchHeatRIDK}]
In order to conclude, we seek to end up in a position where we can apply the standard heat equation maximum principle. Set $Q_t \coloneq D \times (0,t)$ and $w\coloneq\rho+\nu t$.  
Then the system $(w,\vec J)$ solves
\begin{subequations}
\label{w-j-system}
\begin{align}
\frac{\partial w}{\partial t} 
&=-\nabla \cdot \vec J + \nu,\\
\varphi_\tau(w-\nu t)\frac{\partial \vec J}{\partial t}
&= -\gamma\,\vec J - k_BT\,\nabla w -\nabla \potential\,\cdot (w-\nu t)^{+} + \sigma\,\frac{1}{\sqrt N}\,\sqrt{\rho\,\varphi_\tau(w - \nu t)}\,\vec\xi_\ep.
\end{align}
\end{subequations}
We distinguish two cases.

\emph{Case 1}. It holds $\min_{Q_{T-\nu}}w \geq 0$: then $\min_{Q_{T-\nu}}\rho \geq -\nu T$. 

\emph{Case 2}. It holds $\min_{Q_{T-\nu}}w < 0$. The definition of $\varphi_\tau$ and the fact that $w-\nu t \leq w $ imply that, at $(\f{x}_{\min,\nu}, t_{\min,\nu})\coloneq \arg\min_{Q_{T-\nu}}{w}$, the system \eqref{w-j-system} reduces to  
\begin{subequations}
\label{w-j-system-reduced}
\begin{align*}
\frac{\partial w}{\partial t} 
&=-\nabla \cdot \vec J + \nu,\\
0 &= -\gamma\, \vec J - k_BT\,\nabla w ,
\end{align*}
\end{subequations}
or, equivalently, using the regularity of $(\rho, \vec j)$ at $(\f{x}_{\min,\nu}, t_{\min,\nu})$,
\begin{align*}
\frac{\partial w}{\partial t} = \frac{k_B T}{\gamma}\Delta w + \nu.
\end{align*}
At this stage, one can apply the standard contradiction for the heat equation maximum principle (granted by the fact that $\partial w/\partial t \leq 0$ and $\Delta w \geq 0$), and deduce that $\min_{Q_{T-\nu}}{w} = \min \rho_0$. 

Putting the two cases together, we obtain $\min_{Q_{T-\nu}}{w} \geq \min\{-\nu T; \min \rho_0\}$. Using the continuity of $\rho$ and the definition of $w$, we conclude by writing
\begin{align*}
\min_{Q_{T}}{\rho} & = \lim_{\nu\rightarrow 0}{\min_{Q_{T-\nu}}{\rho}} \geq  \lim_{\nu\rightarrow 0}{\left\{\min_{Q_{T-\nu}}{w} -\nu T\right\}} \geq \min\{0;\min\rho_0\}.    \qedhere
\end{align*}
\end{proof}

\begin{remark}\label{assumptions_positivity}
Even though Proposition \ref{PositivitySwitchHeatRIDK} is a step in the right direction when it comes to models which preserve positivity of the density, its application relies on a well-posedness theory for the solution (in particular, suitable space and time differentiability in the region $\{(\f{x},t)\colon \rho(\vec x, t)<0\}$): such a well-posedness theory is still missing, and is deferred to future works. 
\end{remark}

\begin{remark}
As far as maximum principles in the discrete setting are concerned, we have so far looked at schemes at the prototype scheme
\begin{align}
\frac{\rho_h(\f{x},t)-\rho_h(\f{x},t-\delta t)}{\delta t} & = -\nabla_h \cdot \vec{j}_h(\f{x},t),  \label{who} \\
\varphi_\tau\frac{\vec{j}(\f{x},t)-\vec{j}(\f{x},t-\delta t)}{\delta t}
&= -\gamma\,\vec{j}_h(\f{x},t) - k_BT\,\nabla_h\rho_h(\f{x},t) \nonumber\\
&\quad + \sigma\,\frac{1}{\sqrt N}\,\sqrt{\rho_h(\f{x},t-\delta t) \varphi_\tau} (\vec\xi_{\ep,h}(\f{x},t) - \vec\xi_{\ep,h}(\f{x},t-\delta t)),\label{boo}
\end{align}
where $\delta t$ is a timestep, $\nabla_h \cdot $ and $\nabla_h$ are (non-local) numerical discretisations of the divergence and gradient, such that the operator $ \nabla_h \cdot  \nabla_h$ is non-negative. 

The aim is to choose $\varphi_\tau$ so as to obtain a discrete maximum principle.
Mimicking \cref{PositivitySwitchHeatRIDK}, suppose that $w_h=\rho_h+\nu\,t$ for some $\nu>0$ attains its minimum at $x_{\min},t_{\min}$ with a negative value, $w_h(x_{\min}, t_{\min})<0$. To eliminate the left-hand side of \eqref{boo}, we need to impose $\varphi_\tau=0$ at $(x_{\min}, t_{\min})$. 
It must also be zero at neighbouring values of $x_{\min}$ in order to evaluate the term $\nabla_h \cdot \vec j_h$ and at neighbouring values of $t_{\min}$ to eliminate the noise term. 
With this assumption, we find $0=-\gamma \vec j_h - k_B T\, \nabla_h \rho_h$ and can now deduce that 
\begin{align*}
\frac{w_h(x_{\min},t_{\min}) - w_h(x_{\min},t_{\min} - \delta t)}{\delta t} = \frac{k_B T}{\gamma}\nabla_h \cdot \nabla_h w_h(x_{\min}, t_{\min}) + \nu.
\end{align*}
The left-hand side is negative and, if $\nabla_h\cdot \nabla_h$ is a non-negative operator, the right-hand side is positive, leading to a contradiction.

The assumptions so far discussed for $\varphi_\tau$ are quite demanding, and it is unclear how to make practical choices for $\varphi_\tau$.
So far, we have found no scheme for which we can guarantee positivity as a computationally convenient $\varphi_\tau$ is not forthcoming. The semi-implicit time-stepper that has been implemented with DG (which uses the local, more simplistic choice of $\varphi_\tau$ given for the continuous case, see discussions following \eqref{Mod}) can lead to negativity of the density; see for example \cref{fig:tau_small}. In practise, we are able to avoid negative regions by increasing $\tau$ and refining the discretisation parameters. Again, more analysis is required to quantify the preceding considerations.
\end{remark}

\begin{figure}[h]
  \centering
  \includegraphics{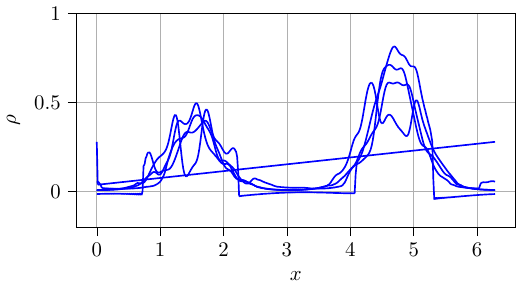}
  \caption{Example profiles of $\rho$ from solving \eqref{Mod} with $\tau=0.05$. The density profile $\rho$  takes some negative values near $t=10$.}
  \label{fig:tau_small}
  \end{figure}

\section{example: ridk for reacting/diffusing populations in two dimensions}\label{extended}
  
In order to demonstrate the applicability of our DG framework, we focus on a DK model associated with a system of reacting/diffusing particles in two dimensions: first, we describe the underlying particle model and then present a DK model for the same dynamics. We compare numerical simulations of such a model to the particle system (using the time-scale regularisation presented in subsection \ref{sec:RIDK-switch}) and verify that, under suitable conditions, the microscopic dynamics can be replicated. 

\subsection{The particle system}
Consider $N$ particles with position $q_i$ and momentum $p_i$ of type $T_i\in\{A,B\}$ following Langevin dynamics
\begin{align*}
  \m q_i=p_i\,\m t,\qquad \m p_i=-\gamma \,p_i \,\m t - \nabla \mathcal{V}(q_i)\,\m t + \sigma\,\m \beta_i(t),
\end{align*}
for dissipation $\gamma$ and noise coefficient $\sigma$ in an external potential $\mathcal V$. The particles react as $A+B \mapsto  2B$ with rate $\kappa$ in a ball of radius of size $r$ (that is, if one particle each of type $A$ and $B$ are within distance $r$ of each other, the type-$A$ particle changes type with probability $1-\exp(-\kappa\,\Delta t)$ on a time interval of length $\Delta t$). 

In the numerical experiments that follow, $\gamma=0.3$, $\sigma=0.2$, $\mathcal{V}(x,y)=\frac18 (\cos(y/2)^2+2\cos(1+x/2)^2)$. There are initially $N=5000$ particles, consisting of $N_A=4500$ particles of type $A$ and $N_B=500$ particles of type $B$ reacting in a ball of radius $r=0.15$ with rate $\kappa=0.2$.  Initially, the particles have zero momenta and positions given by i.i.d.  samples from the normal distributions $\Nrm(\mu_i, \sigma^2_i I)$ for $\mu_A=[4.5,1.5]$, $\mu_B=[4.2,5]$, $\sigma_A=0.8$, $\sigma_B=0.25$. Snapshots of the densities found by simulating the particle model are shown in \cref{buntag1,buntag2,buntag3}. Particles of different types start off separated according to the initial distributions (\cref{buntag1}),  fall into the potential well and start mixing (\cref{buntag2}), before rapidly converting to type-$B$ particles (\cref{buntag3}). This example follows \cite{helfmann2021interacting}, which has a similar example in one dimension for the over-damped case.

\begin{figure}[h]
  \centering
  \includegraphics[scale=1]{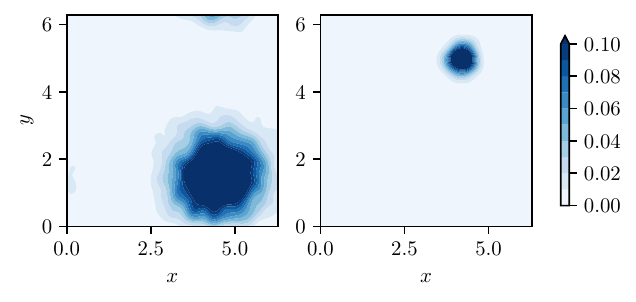}
  \caption{Particle model at $t=0$: left-hand (respectively, right-hand) plot shows the initial density of type-$A$ (respectively, type-$B$) particles.}\label{buntag1}
  \end{figure}

  \begin{figure}[h]
    \centering
    \includegraphics[scale=1]{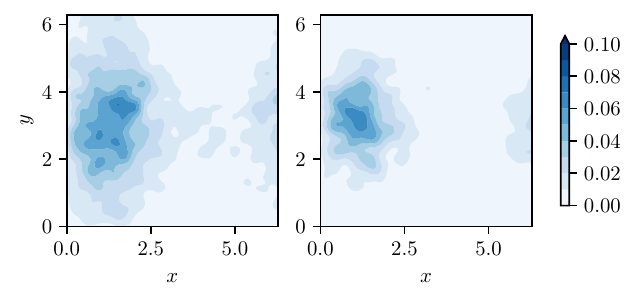}
    \caption{Densities for the particle model at $t=12$. Both types of particle move into the well of the potential $\mathcal{V}(x,y)=\frac18 (\cos(y/2)^2+2\cos(1+x/2)^2)$ with minimum $(x,y)=(\pi-2,\pi)$.}\label{buntag2}
    \end{figure}

\begin{figure}[h]
  \centering
  \includegraphics[scale=1]{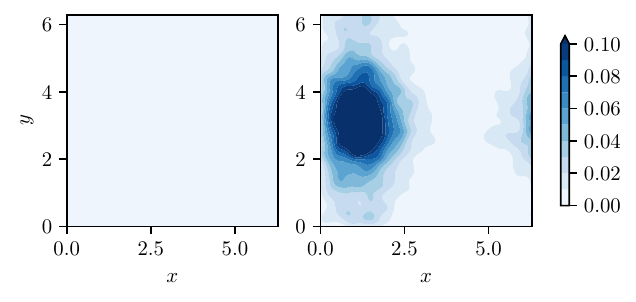}
  \caption{Densities for the particle model at $t=25$. The reaction has converted most particles to type-$B$, which inhabit the bottom of the potential well.}\label{buntag3}
  \end{figure}
  
\subsection{The associated RIDK dynamics}  

The Dean--Kawasaki version of this model is the following coupled system of SPDEs:
\begin{align*}
  \frac{\partial\rho^A}{\partial t}
  &=-\nabla \cdot \vec{j}^A - k\,\pi r^2\,N\,\rho^A\, \rho^B\,1_{\rho_B>\rho_{\mathrm{th}}},\\
  \varphi_\tau(\rho^A)\frac{\partial \vec{j}^A}{\partial t}
  &= -\gamma\,\vec{j}^A - k_B T\,\nabla\rho^A -\nabla \potential \,\rho^A+ \sigma\,\frac{1}{\sqrt N}\,\sqrt{\rho^A\,\varphi_\tau(\rho^A)}\,\vec\xi^A_\ep,\\
  \frac{\partial\rho^B}{\partial t}
  &=-\nabla \cdot \vec{j}^B + k\,\pi r^2\,N\,\rho^A\, \rho^B\,1_{\rho_B>\rho_{\mathrm{th}}},\\
 \varphi_\tau(\rho^B) \frac{\partial \vec{j}^B}{\partial t}
  &= -\gamma\,\vec{j}^B - k_B T\,\nabla\rho^B -\nabla \potential \,\rho^B+ \sigma\,\frac{1}{\sqrt N}\,\sqrt{\rho^B \,\varphi_\tau(\rho^B)}\,\vec\xi^B_\ep.
\end{align*}
This is derived by taking two separate particle and momentum densities $\rho^i, \vec j^i$ for $i\in\{A,B\}$ and two independent copies $\vec \xi_\epsilon^i$ of the RIDK noise term $\vec \xi_\epsilon$. Given the interaction radius is $r$ and there are $N$ particles, in two dimensions, the RIDK equations for $\rho^i, \vec j^i$ are coupled by the reaction term $\kappa\,\pi r^2\,N\,\rho^A\,\rho^B\,1_{\rho_B>\rho_{\mathrm{th}}}$, where $\kappa$ is the reaction rate and $\rho_{\mathrm{th}}$ is a threshold for $B$ particles before reaction is allowed. 
Without this factor, the exponential growth starts very early as the particle density is much more widely spread than for the particle model.

\subsection{Simulations}
To match the particle simulation, we take $\vec j^A=\vec j^B=0$ at time $t=0$, and $\rho^i$ as the pdf of the density $\Nrm(\mu_i, \sigma_i^2)$ scaled by the type mass $N^i/N$ (of particles of type $i\in\{A,B\}$). We take $\rho_{\mathrm{th}}=0.012$ to match the behaviour of the particle system.

  \begin{figure}[h]
    \centering
    \includegraphics[scale=1]{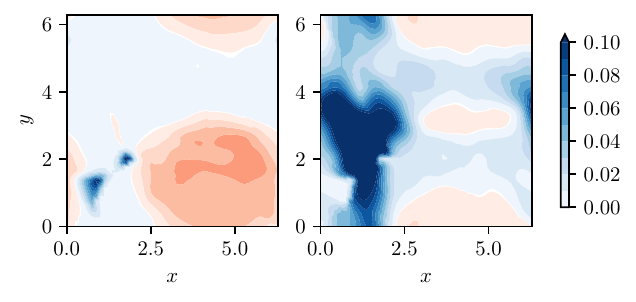}
    \caption{Particle densities $\rho^i$ (left $i=A$; right $i=B$) for the RIDK model at $t=10$. The orange regions indicate negative densities ($\rho^i<0$).}\label{gabii2}
    \end{figure}

\begin{figure}[h]
  \centering
  \includegraphics[scale=1]{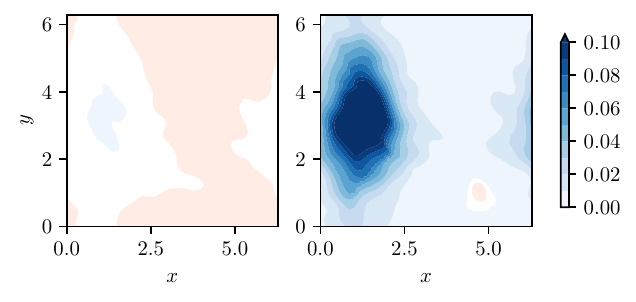}
  \caption{Particle densities for the RIDK model at $t=25$, where most of the mass is shown on the right (type-$B$ particles) and some negative density remains on the left in the type-$A$ density.}\label{gabii3}
  \end{figure}

  See \cref{gabii2,gabii3} for simulation of the unregularised system ($\varphi_\tau(\rho)\equiv 1$). We see similar dynamics to the particle model with the density first concentrating in the well, where the particles react and rapidly convert to type $B$. There are  regions of negative particle densities (indicated by orange) as we saw in one-dimension (\cref{fig:intro}).

\begin{figure}[h]
  \centering
  \includegraphics[scale=1]{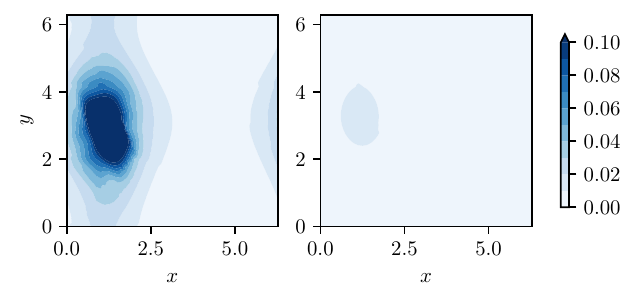}
  \caption{Particle densities for the RIDK model (time-scale regularisation with $\tau=0.05$) at $t=10$.}\label{hapon2}
  \end{figure}

\begin{figure}[h]
\centering
\includegraphics[scale=1]{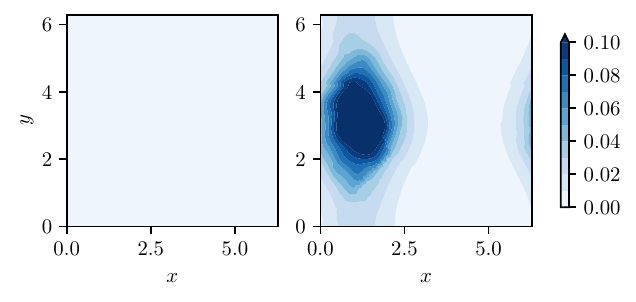}
\caption{Particle densities for the RIDK model (time-scale regularisation) at $t=25$.}\label{hapon3}
\end{figure}

\clearpage
Next we simulate RIDK with the time-scale regularisation ($\varphi_\tau$ defined by \cref{sec:RIDK-switch} with $\tau=0.05$). The simulations are shown in \cref{hapon2,hapon3}. There are no longer any regions of negative density. We compare the evolution of total probability mass of particle~$B$ in \cref{massmass}.  The DG simulation  conserves total mass $\int_D \rho^A(t,x)+\rho^B(t,\vec x)\m\vec x=1$. However, we see the mass of the $B$ particles overshoot one in \cref{massmass}, as there are regions with negative density for the $A$ particles (see \cref{hapon2}). When the time-scale regularisation is used, the transition $A\mapsto B$ is more sudden and the mass of $B$ particles never overshoots.
\begin{figure}[h]
    \centering
    \includegraphics{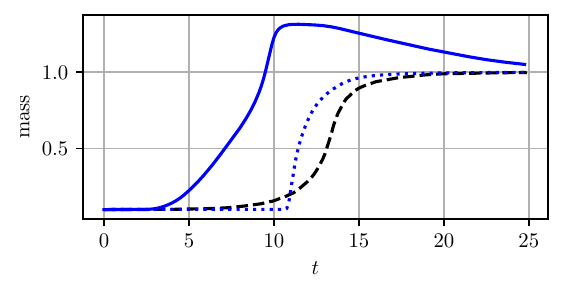}
    \caption{Type-$B$ mass profiles for particle model (black dashed) and RIDK system unregularised (blue solid) and regularised (dotted blue; $\tau=0.02$). Notice that the blue solid line overshoots and exceeds total-mass one (it is compensated by negative mass in type $A$).}\label{massmass}
    \end{figure}

\subsection{Conclusions} Using a RIDK model to simulate a system of diffusing/interacting particles appears to be effective and physically plausible: in particular, specific truncations on the densities grant non-negativity of the densities, and a good description for the transfer of mass (from type $A$ to type $B$). 

Many aspects remain open. Firstly, the simulations are still quite sensitive to the specific regularisation and truncation levels chosen for the density, and more insight is needed to address this point. Secondly, for the sake of simplicity, our RIDK model does not include noise fluctuations at the level of the particle reaction: it would be interesting to assess the impact of adding such a noise to the model (i.e., in the spirit of \cite{kim2017stochastic}).

\appendix
 
\section{selected technical features of ridk model}\label{AppRIDKSelectedFeatures} 
    
\subsection{Noise expansion}\label{AppNoiseExpansion}    

Let $D\coloneq \mathbb{T}^d$. For each $\ep>0$, we define the \emph{von Mises} kernel as
\begin{align}\label{Covariance}
w_\ep(\f{x}-\f{y}) \coloneq Z^{-d}_{\ep}\exp\left\{-\frac{\sum_{\ell=1}^{d}{\sin^2((x_\ell-y_\ell)/2)}}{\ep^2/2}\right\},\qquad \f{x},\f{y}\in D,
\end{align}
for some normalisation constant $Z_\epsilon$.
The noise $\vec\xi_\ep$ in \eqref{ridk} can be explicitly expanded using the spectral properties of the convolution operator $P_{\ep}\colon L^2(D)\rightarrow L^2(D)\colon f\mapsto w_\ep \ast f$, which, due
to the separability of the kernel $w_{\ep}$, are readily available from the one-dimensional case~\cite[Section
  4.2]{Cornalba2019a}. More specifically, with $\{e_j\}_{j\in\mathbb{Z}}$ being the trigonometric system
\begin{align*}
 e_{j}(x) \coloneqq 
  \begin{cases}
    \pi^{-1/2}\cos(jx), & \mbox{if } j>0, \vspace{0.3 pc}\\
    \pi^{-1/2}\sin(jx), & \mbox{if } j<0, \vspace{0.3 pc}\\
    (2\pi)^{-1/2}, & \mbox{if } j=0,
  \end{cases}                   
\end{align*}
it is not difficult to see that the family $\{f_{\f{j},s}\}_{\f{j}\in\mathbb{Z}^d}$ defined as
\begin{align*}
  f_{\f{j},s}(\f{x}) \coloneqq  
    C(d)\left\{\prod_{\ell=1}^{d}{e_{j_{\ell}}(x_{\ell})}\right\}\left(1+|\f{j}|^2\right)^{-s/2}, \quad \f{j}\in\mathbb{Z}^d,
\end{align*}
is, for some suitable normalisation constant $C(d)$, an $H^s$-orthonormal basis of eigenfunctions for $P_{\sqrt{2}\ep}$ for any
$\ep>0$. Furthermore, the eigenvalue of $P_{\sqrt{2}\ep}$ corresponding to the eigenfunction $f_{\f{j},s}$ is
\begin{align}\label{NoiseEigenvalues}
  \lambda_{\f{j},\ep} & = \prod_{\ell=1}^{d}{\lambda_{j_{\ell},\ep}},
\end{align}
where the eigenvalues from the one-dimensional case are given by
\begin{align*}
  \lambda_{j,\ep} 
  & =
       \mathbf{1}_{j=0} + \mathbf{1}_{j\neq 0}\displaystyle Z^{-1}_{\sqrt{2}\ep}\int_{\mathbb{T}}{e^{-\frac{\sin^2(x/2)}{\ep^2}}\cos(jx)
      \m x}=I_j\left(\{2\ep^2\}^{-1}\right) / I_0\left(\{2\ep^2\}^{-1}\right), 
      \end{align*}
with $I_j$ denoting the modified Bessel function of first kind and order $j$~\cite[Eq.~(9.6.26)]{abramowitz1964handbook}. As a result,
the stochastic process
\begin{align}
  \label{eq:unreg-noise}
  W_{\ep} & \coloneqq \sum_{\f{j}\in\mathbb{Z}^d}{\sqrt{\alpha_{\f{j},s,\ep}}\,(0,f_{\f{j},s},0,\dots,0)}\,\beta_{1,\f{j}}+\cdots\nonumber\\
  & \quad + \sum_{\f{j}\in\mathbb{Z}^d}
  {\sqrt{\alpha_{\f{j},s,\ep}}\,(0,\dots,0,f_{\f{j},s})}\,\beta_{d,\f{j}},\qquad \alpha_{\f{j},s,\ep}
  \coloneqq (1+|\f{j}|^2)^{s}\lambda_{\f{j},\ep},
\end{align}
with i.i.d. families $\{\beta_{\ell,\f{j}}\}_{\ell=1}^{d}$ of independent Brownian motions, is a $\mathcal{W}^s$-valued $Q$-Wiener
process representation of the $\mathbb{R}\times\mathbb{R}^d$-valued stochastic noise $(0,\vec\xi_\ep)$, where $\mathcal{W}^s$ is defined in \eqref{Ws}.

We also prove a handy result concerning the decay of the sequence $\{\lambda_{\f{j},\ep}\}_{\f{j}\in \mathbb{Z}^d}$ as defined in \eqref{NoiseEigenvalues}, which is directly related to \cite[Lemma 3.2]{cornalba2021well}. 

\begin{lemma}\label{LemmaNoiseTailBound}
For $h$ small enough, the following bound holds
\begin{align*}
\sum_{\f{j}\in \mathbb{Z}^d\colon |\f{j}|_1>\ep^{-1}|\ln(h^{2\tilde{q}})|}{ \lambda_{\f{j},\ep}} \leq C(d)\ep^{-d}h^{2\tilde{q}}.
\end{align*}
\end{lemma}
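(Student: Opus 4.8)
The plan is to exploit the tensor-product structure of the eigenvalues and reduce everything to a one-dimensional decay estimate. By \eqref{NoiseEigenvalues} we have $\lambda_{\f j,\ep}=\prod_{\ell=1}^{d}\lambda_{j_\ell,\ep}$, so the multi-dimensional tail factorises once we control the one-dimensional Bessel ratios $\lambda_{j,\ep}=I_j(z)/I_0(z)$ with $z=(2\ep^2)^{-1}$; this is exactly the point at which the statement connects to \cite[Lemma 3.2]{cornalba2021well}. Concretely, I would (i) prove a uniform-in-$\ep$ decay bound for $\lambda_{j,\ep}$, (ii) combine the factors to bound $\lambda_{\f j,\ep}$ by $e^{-\ep|\f j|_1}$ up to a constant, and (iii) sum the resulting geometric-type tail over the $\ell^1$-spheres of $\mathbb{Z}^d$.

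For step (i), I would use the three-term recurrence for modified Bessel functions in its continued-fraction (Amos-type) form, which gives $I_j(z)/I_{j-1}(z)\le z/\bigl(j+\sqrt{j^2+z^2}\bigr)$. Telescoping from $0$ to $j$ yields
\[
\lambda_{j,\ep}=\prod_{k=1}^{j}\frac{I_k(z)}{I_{k-1}(z)}\le \exp\!\bigl(-c\,\ep^2 j^2\bigr)
\]
for some absolute $c>0$ and all small $\ep$, the Gaussian exponent arising from $\sum_{k=1}^{j}\ln\bigl(1+k/z\bigr)\sim j^2/(2z)=\ep^2 j^2$. Since $\sup_{x\ge 0}(x-cx^2)<\infty$, writing $e^{-c\ep^2 j^2}=e^{-\ep|j|}\,e^{\ep|j|-c\ep^2 j^2}$ converts this into the \emph{uniform exponential bound} $\lambda_{j,\ep}\le C_0\,e^{-\ep|j|}$, valid for every $j\in\mathbb{Z}$ with $C_0$ independent of $\ep$. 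The exponential form is exactly what interfaces with the truncation radius $T\coloneqq \ep^{-1}|\ln(h^{2\tilde q})|$, since $e^{-\ep T}=h^{2\tilde q}$.

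For step (iii), the product bound gives $\lambda_{\f j,\ep}\le C_0^{d}\,e^{-\ep|\f j|_1}$. Using that the number of lattice points with $|\f j|_1=n$ is at most $C(d)\,n^{d-1}$, I reduce to
\[
\sum_{|\f j|_1>T}\lambda_{\f j,\ep}\le C_0^{d}\,C(d)\sum_{n>T}n^{d-1}e^{-\ep n}.
\]
Factoring out $e^{-\ep T}=h^{2\tilde q}$ (substituting $n=T+m$) and invoking $\sum_{m\ge1}m^{d-1}e^{-\ep m}\le C(d)\ep^{-d}$ together with $T^{d-1}\ep^{-1}=\ep^{-d}|\ln(h^{2\tilde q})|^{d-1}$ produces the claimed $C(d)\ep^{-d}h^{2\tilde q}$ up to a polylogarithmic factor $|\ln(h^{2\tilde q})|^{d-1}$; this residual factor is absorbed into the constant for $h$ small enough by spending an arbitrarily small share of the Gaussian decay retained in step (i).

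The \textbf{main obstacle} is step (i): the truncation threshold $j\sim \ep^{-1}|\ln h|$ lies in the intermediate regime $\sqrt{z}\ll j\ll z$, where neither the fixed-order ($j$ bounded) nor the pure large-order asymptotics of $I_j$ apply directly, so one must extract a decay rate that is \emph{uniform in $\ep$} and strong enough to deliver the full power $h^{2\tilde q}$ (rather than $h^{\tilde q}$) while keeping the $\ep^{-d}$ prefactor and controlling the lattice-counting and polylogarithmic contributions. The continued-fraction bound circumvents any delicate asymptotic analysis and makes the uniformity transparent.
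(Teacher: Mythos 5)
Your proposal is correct, and it rests on the same core mechanism as the paper's proof --- exponential decay of the one-dimensional eigenvalues $\lambda_{j,\ep}$ once $j\gtrsim \ep^{-1}$, combined with the identity $e^{-\ep T}=h^{2\tilde{q}}$ for the truncation radius $T=\ep^{-1}|\ln(h^{2\tilde{q}})|$ --- but it reaches that mechanism by a different route and treats $d>1$ differently. The paper simply quotes \cite[Lemma 3.2]{cornalba2021well} (with $\alpha=\beta=1/2$) to get the geometric bound $\lambda_{j,\ep}\le \bigl((\ep^{-1}/\sqrt{2}-1)/(\ep^{-1}/\sqrt{2})\bigr)^{j-\ep^{-1}/\sqrt{2}}$, sums the geometric series to obtain the one-dimensional tail $\le 2\ep^{-1}h^{2\tilde{q}}$, and then reduces $d>1$ to $d=1$ by a coordinate-wise union bound of the form $\sum_{\ell}\sum_{|j_\ell|>T/d}$; you instead re-derive the one-dimensional decay from the Amos continued-fraction bound for $I_j/I_{j-1}$ and then sum $C_0^d e^{-\ep|\f{j}|_1}$ directly over $\ell^1$-spheres. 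Your multidimensional step is arguably the cleaner one: it retains the full exponent $h^{2\tilde{q}}$ at the cost of a $|\ln(h^{2\tilde{q}})|^{d-1}$ factor, which you correctly absorb by conceding an arbitrarily small share of the decay rate, whereas the paper's per-coordinate threshold $T/d$ would, read literally, only deliver $h^{2\tilde{q}/d}$ from each coordinate tail. One caution on your step (i): the intermediate Gaussian claim $\lambda_{j,\ep}\le e^{-c\ep^2 j^2}$ cannot hold for \emph{all} $j$ --- for $j\gg \ep^{-2}=2z$ the telescoped product only decays like $e^{-c\,j\ln(j\ep^2)}$, which is strictly weaker than Gaussian --- yet the tail sum runs over all $j>T$, so you do need a bound in that regime. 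The fix is immediate and does not change your argument: the exponential bound $\lambda_{j,\ep}\le C_0e^{-(1+\delta)\ep|j|}$ that you actually use holds uniformly, since for $k\le z$ each factor obeys $\bigl(k/z+\sqrt{1+(k/z)^2}\bigr)^{-1}\le e^{-k/(2z)}$ (giving the Gaussian in that range), while for $k>z$ each factor is already below $(1+\sqrt{2})^{-1}<e^{-(1+\delta)\ep}$ for $\ep$ small; stating the exponential bound separately on the two regimes, rather than routing everything through a global Gaussian, closes the gap.
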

\begin{proof}
Consider the case $d=1$ first. We take $\alpha = \beta = 1/2$ in \cite[Lemma 3.2]{cornalba2021well}. This means that 
\begin{align}\label{EigenvaluesExponetialDecay}
\lambda_{j,\ep} \leq \left(\frac{\ep^{-1}/\sqrt{2}-1}{\ep^{-1}/\sqrt{2}}\right)^{j - \ep^{-1}/\sqrt{2}}\quad \mbox{for }j > \ep^{-1}/\sqrt{2}.
\end{align}
Additionally, for small enough $h$, we obtain
\begin{align}\label{NumberStepsBeforeExponentialDecay}
\ep^{-1}|\ln(h^{2\tilde{q}})| - \ep^{-1}/\sqrt{2} \geq (\ep^{-1}/\sqrt{2})|\ln(h^{2\tilde{q}})|.
\end{align}
Combining \eqref{EigenvaluesExponetialDecay} and \eqref{NumberStepsBeforeExponentialDecay} gives
\begin{align}
\sum_{j\in \mathbb{Z}\colon |{j}|>\ep^{-1}|\ln(h^{2\tilde{q}})|}{ \lambda_{j,\ep}} & \leq 2\sum_{n\geq 0}{\left(\frac{\ep^{-1}/\sqrt{2}-1}{\ep^{-1}/\sqrt{2}}\right)^{n + (\ep^{-1}/\sqrt{2})|\ln(h^{2\tilde{q}})|}} \nonumber\\
& \leq 2(\ep^{-1}/\sqrt{2}) (1/e)^{-\ln(h^{2\tilde{q}})} \leq 2\ep^{-1}h^{2\tilde{q}}.
\end{align}
The extension to arbitrary $d>1$ is done by observing that 
\begin{align*}
\sum_{|\f{j}|_1>\ep^{-2}|\ln(h^{2\tilde{q}})|}{ \lambda_{\f{j},\ep}} \leq \ep^{1-d}\sum_{\ell = 1}^{d}\sum_{|j_{\ell}|>2\ep^{-1}|\ln(h^{2\tilde{q}})|}{ \lambda_{\f{j},\ep}}.
\end{align*}
\end{proof}

\subsection{Proof of Proposition \ref{prop-pathwise}}\label{app:pathwise}
  
The validity of \eqref{MildSolution} and \eqref{BoundWsRIDK} is settled using \cite[Theorem 1.1]{cornalba2021well}. We now proceed to the proof of \eqref{PathwiseSmoothedIntegrand}.
We exploit the equivalence between different notions of solutions to SPDEs, as presented in \cite[Appendix F]{Prevot2007a}. We split the proof in several steps.\\

\emph{Step 1: Basic regularity of $u$}. As $\overline{s}-1>d/2$, $u$ solves 
\begin{align}\label{ridk-reg}
  u(t) & = S(t)\,u_0 
   +\int_{0}^{t}{S(t-s)\,F(u(s))\,\m s} 
    +\int_{0}^{t}{S(t-s)B_{N,\delta}(u(s))\,\m W_{\ep}(s)}
 \end{align}
 on the probability space $\Omega$ and up to some time $T$, 
 where $B_{N,\delta}$ is Lipschitz with respect to the $\mathcal{W}^{\overline{s}-1}$ norm. Using the a priori estimates as in \cite[Theorem 1.1]{Cornalba2021al}, we get
\begin{align}\label{d:10}
 \mean{\int_{0}^{T}{\|B_{N,\delta}(u(t))\|^2_{L^0_2(\mathcal{W}^{\overline{s}-1})}\,\m t}} < \infty.
\end{align}

\emph{Step 2: $u$ is a mild solution $\Longrightarrow$ $u$ is an analytically weak solution}. In this step, we want to apply \cite[Appendix F, Proposition F.0.5(ii)]{Prevot2007a}.
Inequality~\eqref{d:10} allows us to use \cite[Proposition 6.2]{Da-Prato2014a} to deduce that the stochastic integral in \eqref{ridk-reg} has a predictable version. Additionally, for any $\zeta \in A^{\ast}$, with the adjoint operator $A^\ast$ being
$$
A^\ast\colon H^{\overline{s}}\times \f{V}^{\overline{s}}\colon (\phi,\f{\psi}) \mapsto (\nabla \cdot \f{\psi}, -\gamma \f{\psi} + \nabla \phi),
$$
where $\f{V}^{s}$ is defined in \eqref{VsPlus1}, 
we have
\begin{align*}
& \int_{0}^{T}{\mean{\int_{0}^{t}{\|\langle S(t-\tilde{t})B_{N,\delta}(u(\tilde{t})), A^\ast \xi\rangle\|_{L^0_2(\mathcal{W}^{\overline{s}-1})}\, \m \tilde{t}}}\m t} \\
& \quad \leq \int_{0}^{T}{\mean{\int_{0}^{t}{\|\langle B_{N,\delta}(u(\tilde{t}))\|_{L^0_2(\mathcal{W}^{\overline{s}-1})} \|A^\ast \xi\|_{\mathcal{W}^{\overline{s}-1}}\,\m \tilde{t}}}\m t} \stackrel{\eqref{d:10}}{<} \infty.
\end{align*}
Furthermore, it is immediate to see that $\mathbb{P}(\int_{0}^{T}{\|F(u(t))\|_{\mathcal{W}^{\overline{s}-1}}\m t} < \infty) = 1$.
Therefore, we have verified all assumptions of \cite[Appendix F, Proposition F.0.5(ii)]{Prevot2007a}, and we use it to deduce that $u$ is an analytically weak solution.

\emph{Step 3: $u$ is an analytically weak solution $\Longrightarrow$ $u$ is an analytically strong solution}. In this step, we want to apply \cite[Appendix F, Proposition F.0.4(ii)]{Prevot2007a}.
The process $u$ takes values in $\mathcal{W}^{\overline{s}} \subset \mathcal{D}(A) = H^{\overline{s}} \times \f{V}^{\overline{s}}$ due to the assumption $u_0 \in \mathcal{W}^{\overline{s}}$ (and the same existence theory described above with $\overline{s}$ replacing $\overline{s}-1$). Furthermore, using once again the a priori estimates as in \cite[Theorem 1.1]{Cornalba2021al}, we deduce that 
\begin{align*}
& \mathbb{P}\left(\int_{0}^{T}{\|A u(t)\|_{\mathcal{W}^{\overline{s}-1}}\m t} < \infty\right)=1, \qquad \mathbb{P}\left(\int_{0}^{T}{\|F(u(t))\|_{\mathcal{W}^{\overline{s}-1}}\m t} < \infty\right) = 1,\\
& \mathbb{P}\left(\int_{0}^{T}{\|B_{N,\delta}(u(t))\|^2_{L^0_2(\mathcal{W}^{\overline{s}-1})}\,\m t <\infty}\right) = 1.
\end{align*}

We have verified all assumptions of \cite[Appendix F, Proposition F.0.4(ii)]{Prevot2007a}, and we use it to deduce that $u$ is an analytically strong solution. Therefore, \eqref{PathwiseSmoothedIntegrand} is proved. 

\emph{Step 4: $u$ is a high-probability path-wise solution to \eqref{ridk}}: this follows from Remark \ref{rem:1}.

\section{Relevant It\^o differentials}\label{RelevantItoDifferentials}
 
\begin{lemma}[Relevant vector-valued It\^o differentials]\label{ItoDifferential}
Let $\overline{s}$ satisfy \eqref{ScalingNEpsilon}, and let $u$ be the process solving \eqref{PathwiseSmoothedIntegrand}. For any fixed $t>0$, consider the functionals
\begin{align*}
g_1(z) & \colon \mathcal{W}^{\overline{s}-1} \mapsto V_h\colon z\mapsto (\tilde{R}_h-R_h)z,\\
g_2(z) & \colon \mathcal{W}^{\overline{s}-1} \mapsto V_h\colon z\mapsto Q_h(R_h - I)z,\\
g_3(s,z) & \colon \mathcal{W}^{\overline{s}-1} \mapsto V_h\colon z\mapsto e^{(t-s)A_h}(\tilde{R}_h-R_h)z.
\end{align*}
where $R_h,\tilde{R}_h$ have been introduced in Section \ref{ErrorAnalysisLinearPDE}.
Then we have
\begin{align}
\emph{\m} g_1(u) & = (\tilde{R}_h-R_h)\emph{\m} u, \label{e:1000a}\\
\emph{\m} g_2(u) & = Q_h(R_h-I)\emph{\m} u, \label{e:1000b}\\
\emph{\m} g_3(s,u) & = -e^{(t-s)A_h}A_h(\tilde{R}_h-R_h)u\emph{\m} s + e^{(t-s)A_h}(\tilde{R}_h-R_h)\emph{\m} u.\label{e:1000c}
\end{align}
\end{lemma}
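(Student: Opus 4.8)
The plan is to recognise all three statements as instances of the Itô chain rule applied to (possibly time-dependent) \emph{bounded linear} operators acting on the semimartingale $u$. First I would record that every operator appearing in $g_1,g_2,g_3$ is bounded and linear on the relevant space. Since the scaling \eqref{ScalingNEpsilon} forces $\overline{s}>q+3\geq 3$, we have the continuous embedding $\mathcal{W}^{\overline{s}-1}\hookrightarrow\mathcal{W}^2$, so $R_h$ and $\tilde{R}_h$ lie in $\mathcal{L}(\mathcal{W}^{\overline{s}-1},V_h)$ by \cref{lemma:Rs}; the $\langle\cdot,\cdot\rangle$-projection $Q_h$ is likewise bounded; and because $V_h$ is finite-dimensional, $A_h\colon V_h\to V_{h,0}$ is bounded and generates the matrix-exponential semigroup $\{e^{tA_h}\}_{t\geq 0}$, which is real-analytic in $t$ with $\tfrac{\m}{\m t}e^{tA_h}=A_he^{tA_h}=e^{tA_h}A_h$.

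For \eqref{e:1000a} and \eqref{e:1000b}, I would use that $u$ is an analytically strong solution, so by \eqref{PathwiseSmoothedIntegrand} its semimartingale decomposition reads $\m u=(Au+F(u))\,\m s+B_{N,\delta}(u)\,\m W_\ep$. Writing $L_1\coloneq\tilde{R}_h-R_h$ and $L_2\coloneq Q_h(R_h-I)$, both bounded linear, it suffices to pull $L_i$ through each integral in this decomposition. For the Bochner (drift) part this is immediate by linearity and continuity; for the stochastic part it follows from the standard fact that a bounded linear operator commutes with the stochastic integral, i.e.\ $L_i\int_0^t\Phi\,\m W_\ep=\int_0^t L_i\Phi\,\m W_\ep$ (see \cite[Section 4.2]{Da-Prato2014a}). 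This yields $\m g_i(u)=L_i\,\m u$, which is precisely \eqref{e:1000a}--\eqref{e:1000b}.

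For \eqref{e:1000c}, note $g_3(s,u)=G(s)u$ with $G(s)\coloneq e^{(t-s)A_h}(\tilde{R}_h-R_h)\in\mathcal{L}(\mathcal{W}^{\overline{s}-1},V_h)$. The map $s\mapsto G(s)$ is continuously differentiable in the operator norm with $G'(s)=-e^{(t-s)A_h}A_h(\tilde{R}_h-R_h)$, by the analyticity of the semigroup noted above and the chain rule (the minus sign arising from the $t-s$ argument). I would then invoke the Itô product rule for a \emph{deterministic}, norm-differentiable operator-valued function multiplied against an Itô process: since $G$ carries no martingale part, there is no cross-variation contribution, so
\[
\m\bigl(G(s)u(s)\bigr)=G'(s)u(s)\,\m s+G(s)\,\m u(s),
\]
which upon inserting the expression for $G'$ is exactly \eqref{e:1000c}. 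A clean route to this product rule is to write $G(s)=G(0)+\int_0^s G'(r)\,\m r$, apply the commutation step of the previous paragraph to each frozen operator $G'(r)$, and exchange the resulting double integral by stochastic Fubini.

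The hard part will not be conceptual but the bookkeeping needed to justify the infinite-dimensional stochastic calculus: confirming that the integrands $L_iB_{N,\delta}(u)$ and $G(s)B_{N,\delta}(u)$ are genuinely stochastically integrable in $V_h$ — which follows from \eqref{d:10} together with the boundedness of the operators — and verifying the hypotheses of the stochastic-Fubini/product-rule step for $g_3$. Since $V_h$ is finite-dimensional, once the operators have been applied all objects reduce to finite-dimensional Itô processes, and these regularity checks become routine given the bounds already established for $u$.
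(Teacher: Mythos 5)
Your argument is correct, but it reaches the conclusion by a genuinely different route from the paper. The paper's proof simply invokes the vector-valued It\^o formula of \cite[Theorem 3.8]{curtain1970ito} and checks its hypotheses one by one: boundedness of $\tilde{R}_h-R_h$ and $Q_h(R_h-I)$ via \cref{lemma:Rs}, the fact that $Au\in\mathcal{W}^{\overline{s}-1}$ for the drift hypothesis, Lipschitzness of $B_{N,\delta}$ to control the fourth moments required by that theorem, and (for $g_3$) the time differentiability of $s\mapsto e^{(t-s)A_h}$, which is trivial because $V_h$ is finite-dimensional. You instead bypass the general nonlinear It\^o formula altogether by exploiting linearity: you pull the bounded operators through the Bochner and stochastic integrals in the strong-form decomposition \eqref{PathwiseSmoothedIntegrand}, and for $g_3$ you derive the product rule for a deterministic $C^1$ operator-valued factor via stochastic Fubini. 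Both routes rest on the same two pillars (the operator bounds of \cref{lemma:Rs}, extended to $\mathcal{W}^{\overline{s}-1}$ by the embedding you note, and the strong solution property of $u$), but your version is more self-contained and sidesteps the fourth-moment hypothesis of Curtain's theorem --- which is harmless here anyway, since the second-derivative term of the It\^o correction vanishes identically for linear maps --- at the cost of having to justify the commutation and Fubini steps by hand; the paper's version is shorter because it delegates exactly those verifications to the cited theorem. The integrability checks you flag at the end are indeed covered by \eqref{d:10} together with the operator bounds, so there is no gap.
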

\begin{proof}
We use the vector-valued It\^o formula given in \cite[Theorem 3.8]{curtain1970ito}, on the time interval $[0,t]$, and where $K \coloneq \mathcal{W}^{\overline{s}-1}$ and $G \coloneq V_h$.

As for $g_1$, assumptions (i), (iii) and (vi) of \cite[Theorem 3.8]{curtain1970ito} are trivially satisfied. Assumption (ii) holds as $\tilde{R}_h-R_h \in \mathcal{L}(\mathcal{W}^{\overline{s}-1},V_h)$: this can be easily deduced using \cref{nearly_coercive}, \cref{bdd_ah} and standard interpolation estimates from \cite[Chapter 3]{quarteroni2008numerical}.
Assumption (iv) holds since $Au \in \mathcal{W}^{\overline{s}-1}$, and (v) holds since the stochastic integrand $B_N(u)$ is Lipschitz, so all moments (in particular, the fourth moment) can be bounded. We can use \cite[Theorem 3.8]{curtain1970ito} and get \eqref{e:1000a}. 
Similar considerations settle also \eqref{e:1000b}.
As for $g_3$, everything is identical except for point (i) (the time differentiability of $g_3$ is trivial given the fact that the exponential $e^{(t-s)A_h}$ has a finite-dimensional input). Using \cite[Theorem 3.8]{curtain1970ito} and \eqref{e:1000a} we get \eqref{e:1000c}.
\end{proof}

{\bfseries Acknowledgements}. 
The authors thank the anonymous referees for their careful reading of the manuscript and their valuable suggestions.
FC gratefully acknowledges funding from the Austrian Science Fund (FWF) through the project F65, and from the European Union’s Horizon 2020 research and innovation programme under the Marie Sk\l odowska-Curie grant agreement No. 754411 (the latter funding source covered the first part of this project).

\printbibliography

\end{document}